\documentclass[12pt,reqno]{amsart}

\usepackage{setspace}
\usepackage{makecell}

\usepackage[T1]{fontenc}
\usepackage{ae,aecompl}
\usepackage{graphics,cancel,graphicx}
\usepackage{epsfig,psfrag,manfnt}
\usepackage{mathrsfs,amsmath}
\usepackage{graphicx,mathabx}
\usepackage{bbm,subfigure,rotating,bm,float,mathdots,wasysym,scalerel,color,xcolor}
\usepackage{mathrsfs,setspace}
\usepackage{graphics,cancel,graphicx,mathabx}
\usepackage{epsfig,psfrag,manfnt}
\usepackage{bbm,subfigure,rotating,bm,float,mathdots,wasysym,scalerel}
\usepackage{array}
\usepackage{comment}

\definecolor{cerulean}{rgb}{0.0, 0.48, 0.65}
\definecolor{burntorange}{rgb}{0.8, 0.33, 0.0}
\definecolor{green}{rgb}{0.3,0.6,0.3}
\definecolor{red}{rgb}{1, 0.2, 0.1}

\usepackage[T1]{fontenc}
\usepackage{ae,aecompl}
\usepackage{mathrsfs,setspace}
\usepackage{graphics,cancel,graphicx,mathabx}
\usepackage{epsfig,psfrag,manfnt}
\usepackage{bbm,subfigure,rotating,bm,float,mathdots,wasysym,scalerel,yfonts}
\usepackage{array}

\usepackage{relsize}
\usepackage{exscale,xparse}

\usepackage{tikz-cd}
\usepackage{tikz}
\usetikzlibrary{positioning} 

\usepackage{array}
\usepackage[hidelinks]{hyperref}
\usepackage{xcolor}
\hypersetup{
    colorlinks,
    linkcolor={burntorange!50!black},
    citecolor={blue!50!black},
    urlcolor={blue!80!black}
}

\usepackage{tikz}
\usepackage[framemethod=TikZ]{mdframed}
\mdfsetup{skipabove=\topskip,skipbelow=\topskip}
\tikzstyle{titregris} =
    [draw=gray, thick, fill=gray,%
        text=black, rectangle,  
        right,minimum height=.7cm]
\pgfdeclarehorizontalshading{exercisebackground}{100bp}{color(0bp)=(gray!40);color(100bp)=(black!5)}
\makeatletter
\def\mdf@@exercisepoints{}
\define@key{mdf}{exercisepoints}{
    \def\mdf@exercisepoints{#1}
}
\def\mdf@@myboxedtitle{}
\define@key{mdf}{myboxedtitle}{
    \def\mdf@myboxedtitle{#1}
}
\mdfdefinestyle{exercisestyle}{%
    outerlinewidth=1em,%
    outerlinecolor=white,%
    leftmargin=-1em,%
    rightmargin=-1em,%
    middlelinewidth=1.2pt,%
    roundcorner=5pt,%
    linecolor=gray,%
    apptotikzsetting={\tikzset{mdfbackground/.append style={gray!30}}},
    innertopmargin=1.2\baselineskip,
    skipabove={\dimexpr0.5\baselineskip+\topskip\relax},
    skipbelow={-1em},
    needspace=3\baselineskip,
    singleextra={%
        \node[titregris,xshift=1cm] at (P-|O) %
            {~\mdf@frametitlefont{\mdf@myboxedtitle}~};
},%
    firstextra={%
\node[titregris,xshift=1cm] at (P-|O) %
{~\mdf@frametitlefont{\mdf@myboxedtitle}~};
},
}
\makeatother

\NewDocumentCommand{\qfrac}{smm}{%
  \dfrac{\IfBooleanT{#1}{\vphantom{\big|}}#2}{\mathstrut #3}%
}

\usepackage{adjustbox}

\usepackage{relsize}
\usepackage{exscale,xparse}

\usepackage{tikz-cd}
\usepackage{tikz}
\usetikzlibrary{positioning} 

\usepackage{array}

\usepackage{hyperref}

\usepackage[all,cmtip]{xy}
\usepackage{amssymb,amsmath,amsfonts,amsthm,color}

\let\oldsum\sum
\renewcommand{\sum}{\mathop{\scalebox{0.81}{$\oldsum$}}\limits}

\newcommand{\calM}{\mathcal{M}}

\newcommand{\calO}{\mathcal{O}}

\newcommand{\mC}{\mathbb{C}}
\newcommand{\mD}{\mathbb{D}}

\newcommand{\mN}{\mathbb{N}}

\newcommand{\mR}{\mathbb{R}}

\newcommand{\mT}{\mathbb{T}}

\newcommand{\mZ}{\mathbb{Z}}

\newtheorem{theorem}{Theorem}[section]
\newtheorem{lemma}[theorem]{Lemma}
\newtheorem{corollary}[theorem]{Corollary}
\newtheorem{proposition}[theorem]{Proposition}

\theoremstyle{definition}

\newtheorem{remark}[theorem]{Remark}

\theoremstyle{definition}

\theoremstyle{definition}

\theoremstyle{definition}
\newtheorem{example}[theorem]{Example}

\begin{document}

\keywords{Bergman space, Hardy space, flat module,
  faithfully flat module, locally vanishing module}

\subjclass[2020]{Primary 46E25;
  Secondary 16D40, 46J15, 46E20}

 \title[]{On the flatness of the Bergman space
   \\as a module over the Hardy algebra}

\author[]{Amol Sasane}
\address{Mathematics Department, London School of Economics, 
Houghton Street, London WC2A 2AE, UK}
\email{A.J.Sasane@LSE.ac.uk}
 
\maketitle

\begin{abstract} 
Let $A^2$ denote the Bergman space of the open unit disc $\mathbb{D}$ in
the complex plane, consisting of holomorphic functions that are square-integrable with respect to the area measure in the disc. Let 
$H^\infty$ denote the Hardy algebra, consisting of bounded and holomorphic functions
on $\mathbb{D}$.  Equipped with pointwise operations, $A^2$ is an
$H^\infty$-module.  It is shown that the $H^\infty$-module $A^2$ is
not flat. It is also shown that there exists a maximal ideal
$\mathfrak{m}$ in $H^\infty$ such that $\mathfrak{m}A^2\!=\!A^2$.
\end{abstract}

\section{Introduction}

\noindent The aim of this article is to show that the
$H^\infty$-module $A^2$ is not flat, and moreover to prove that there
exists a maximal ideal $\mathfrak{m}$ in $H^\infty$ such that
$\mathfrak{m}A^2\!=\!A^2$.

Let $R$ be a commutative unital ring.  The notions of flatness and
faithful flatness of $R$-modules were introduced by
Jean-Pierre~Serre in his influential `GAGA' paper \cite{Ser}. Recall
that an $R$-module $M$ is {\em flat} if the functor\footnote{We recall that the
functor $-\otimes_{R} M$ acts as follows: Objects, namely $R$-modules
$N$, are sent to $N\otimes_R M$, and morphisms, which are $R$-linear
maps $f:N\to N'$ between $R$-modules $N,N'$, are sent to morphisms
$f\otimes 1_M: N\otimes_R M\to N'\otimes_R M$, mapping elements
$n\otimes m\in N\otimes_R M$ to $f(n)\otimes m\in N'\otimes_R M$.}
$-\otimes_{R} M$, from the category $R$-Mod of $R$-modules to $R$-Mod,
is exact. That is, for every short sequence
\begin{equation}
\textstyle
0\longrightarrow N'\longrightarrow N \longrightarrow N''\longrightarrow 0,
\tag{$S$}
\end{equation}
which happens to be exact, we have that the new short sequence 
\begin{equation}
\textstyle 
0\longrightarrow N'\otimes_{R} M\longrightarrow N \otimes_{R} M
\longrightarrow N''\otimes_{R} M\longrightarrow 0
\tag{$\widetilde{S}$}
\end{equation}
is exact. Flatness of modules is a subtle property,
as illustrated by the following examples.

\begin{example}(Case when $R$ is a principal ideal domain.) 

\noindent 
If $R$ is a principal ideal domain, then an $R$-module $M$ is flat if
and only if whenever $r\!\in\! R$ and $m\!\in\! M$ satisfy\footnote{We recall
that such an $m$ is called a {\em torsion} element. Thus, when $R$ is
a principal ideal domain, flatness is equivalent to 
torsion-freeness. Geometrically, the terminology `flat' reflects the intuition
that the quasi-coherent sheaf associated to $M$ varies uniformly over
$\text{Spec}\;R$, having well-behaved fibers (see
\cite[Chapter~24]{Vak}).}  $r\!\cdot\! m\!=\!0$, we have $r\!=\!0$ or $m\!=\!0$ (see,
e.g., \cite[\S4.2, Proposition~7, p.119]{Bos}).

Thus the $\mZ$-module $\mZ/2\mZ$ (integers modulo $2$) is not flat
(since for example $2\!\cdot\! [1]\!=\![2]\!=\![0]$ in $\mZ/2\mZ$).

Every complex vector space, regarded as  a
$\mC$-module, is flat.

If $R\!=\!\mC[z]$ (set of all complex polynomials viewed as functions on $\mC$),
$M\!=\!\calO(\mC)$ (set of all entire functions), and all operations are
defined in a pointwise manner, then the $R$-module $M$ is flat.
\hfill$\Diamond$
\end{example}

\begin{example}(Free $\subset$ Projective $\subset$ Flat.)

\noindent Recall that an $R$-module $M$ is {\em free} if it has a
basis. Equivalently, there exists an index set $I$ such that $M$ is
isomorphic to the $R$-module $R^{(I)}$ of finitely supported
maps from $I$ to $R$, equipped with pointwise operations). An $R$-module $M$ is {\em projective} if it is the direct summand of a free module, that is, 
there exists an $R$-module $N$ and an index set $I$ such that $M\oplus N\!\cong \!R^{(I)}$. Every free module is
projective, and every projective module is flat (see,
e.g., \cite[\S4.2, p.123]{Bos}).

If $\calM$ is a smooth manifold, $R\!=\!C^\infty(\calM)$ is the ring of all real-valued
smooth functions on $\calM$, $M\!=\!\chi(\calM)$ is the $R$-module of all vector fields on
$\calM$, with the usual module action  (given by $(fX)_p\!=\!f(p) X_p$ for all $p\in \calM$,
$f\in C^\infty(\calM)$ and $X\in \chi(\calM)$), then the $R$-module
$M$ is flat. Indeed, the `smooth' Serre-Swan theorem
(see \cite[\S12.33]{Nes}) states that the $C^\infty(\calM)$-module $
\chi(\calM)$ is a finitely-generated projective module, and hence it is flat.
\hfill$\Diamond$
\end{example}

\begin{example}(A matricial characterisation of flatness.)

\noindent 
An $R$-module $M$ is flat if and only if for all $n\in \mN$, for all
$r_1,\cdots,r_n\in R$ and for all $m_1,\cdots,m_n\in M$ satisfying the
relation
\vspace{-0.12cm}
$$
\textstyle \sum\limits_{i=1}^n r_i m_i = 0,
$$
there exist $k\in \mN$, $\theta_{ij}\in R$ ($1\!\le\! i\!\le\! n$,
$1\!\le\! j\!\le \!k$), and $\mu_1,\cdots, \mu_k\in M$ such that
\vspace{-0.15cm}
$$
\textstyle
\begin{array}{ll}
  m_i = \sum\limits_{j=1}^k \theta_{ij} \mu_j
  &\!\!\text{for all }i\in \{1,\cdots, n\}, 
\text{ and }\\[-0.09cm]
\textstyle \sum\limits_{i=1}^n r_i\theta_{ij}=0
&\!\!\text{for all }j\in \{1,\cdots,k\} 
\end{array}
$$
(see \cite[Chapter~I, \S2, Proposition~13]{Bou}). 

\noindent If $R\!=\!C_{\text{b}}(\mR)$, the ring of all complex-valued continuous
and bounded functions on $\mR$, and $M\!=\!\textstyle C_{\text{poly}}(\mR)$ is the set of all continuous functions with at most polynomial growth, that is, 
$$
\textstyle 
\!C_{\text{poly}}(\mR)
\!:=\!\left\{f:\mR\to \mC\;\Big|
  {\scaleobj{0.96}{\begin{array}{ll}
        f\text{ is continuous, }\exists M_f\!>\!0\text{ and }
        \exists k_f\in \mN\cup\{0\}\\
        \text{such that }\forall x\in \mR, \,
        |f(x)|\!\le\! M_f(1+x^2)^{k_f}
  \end{array}}}
  \!\!\right\},
$$
and all operations are defined in a pointwise manner, then  the
above characterisation shows that the $R$-module $M$ is
flat.

It was shown in \cite{Sas} that, if $X$ is a locally compact
Hausdorff space equipped with a nonnegative Radon measure $\mu$,
$R\!=\!L^\infty(X,\mu)$ (set of all essentially bounded complex-valued
functions on $X$), $M\!=\!L^2(X,\mu)$ (set of all square-integrable
complex-valued functions on $X$), and all operations are defined in a
pointwise manner, then the $R$-module $M$ is flat.

Let $ {\mathbb{D}}\!=\!\{z\in {\mathbb{C}}\!:\!|z|\!<\!1\}$.
Denote by
${\mathcal{O}}({\mathbb{D}})$ the set of all holomorphic functions on $\mD$. The {\em Hardy algebra} $H^\infty$ is
the Banach algebra of bounded functions $f\!\in\!
{\mathcal{O}}({\mathbb{D}})$, equipped with pointwise operations, and
the supremum norm ($\|f\|_\infty\!:=\!\sup_{z\in {\mathbb{D}}}|f(z)|$
for $f\in H^\infty$).  The {\em Hardy space} $H^2$ is the Hilbert
space of all $f\in {\mathcal{O}}({\mathbb{D}})$ such that
$$
\textstyle \|f\|_{H^2}^2\!:=\!\sum_{n=0}^\infty |c_n|^2\!<\!\infty,
\text{ where }f(z)\!=\!\sum_{n=0}^\infty c_n z^n\,\, (z\in {\mathbb{D}}).
$$
With pointwise multiplication, $H^2$ is an $H^\infty$-module. 
The vector-valued Beurling theorem (see, e.g., \cite[Corollary~6, p.17-18]{Nik}),  
characterising, 
for each $n \in \mathbb{N}$, 
 the closed (componentwise) shift-invariant subspaces of
the direct sum $(H^2)^n := H^2 \oplus \cdots \oplus H^2$ ($n$ copies), implies that
$H^2$ is a flat $H^\infty$-module (see \cite[Proposition~3.1]{Sas}).
\hfill$\Diamond$
\end{example}

\noindent Next we turn our attention to faithful flatness.
An $R$-module $M$ is {\em faithfully flat}
if for every short sequence of $R$-modules 
\begin{equation}
\textstyle
0\longrightarrow N'\longrightarrow N \longrightarrow N''\longrightarrow 0,
 \tag{$S$}
\end{equation}
the sequence $(S)$ is exact if and only if the sequence 
\begin{equation}
\textstyle 
0\longrightarrow N'\otimes_{R} M\longrightarrow N \otimes_{R} M
\longrightarrow N''\otimes_{R} M\longrightarrow 0
\tag{$\widetilde{S}$}
\end{equation}
is exact. For a flat $R$-module $M$,  faithful flatness is  equivalent to the condition that
$\mathfrak{m} M\subsetneq M$ 
 for every
maximal ideal $\mathfrak{m}$ of $R$. Here,
$$
\textstyle \mathfrak{m}M\!:=\!
\Big\{\;
    {\scaleobj{0.96}{\sum\limits_{i=1}^n r_i m_i:
        n\!\in\! \mathbb{N}, \text{ and for all }i\in\! \{1,\cdots,n\},
        \;r_i\!\in\! \mathfrak{m} \text{ and } m_i\!\in\! M}}
\Big\}.
$$
This motivates the study of the opposite algebraic situation,
namely when there exists a maximal ideal $\mathfrak{m}$ of $R$ such that
$\mathfrak{m}M\!=\!M$. If $M$ is finitely-generated, then by Nakayama's lemma (see, e.g.,
\cite[\S8.2]{Vak}), this condition is equivalent to
$M_{\mathfrak{m}}\!=\!0$, or equivalently,
$\mathfrak{m}\not\in \operatorname{supp}M$.
Thus, in the geometric language of sheaf theory\footnote{See,
for example, \cite[\S2.7]{Vak} for the notion of the support of a
sheaf, \cite[\S4.1]{Vak} for the construction of the quasi-coherent
sheaf $\widetilde{M}$ associated to an $R$-module $M$,
\phantom{[12]} \cite[Exercise~4.1.E]{Vak} for the identification
$(\widetilde{M})_{\mathfrak{m}}\simeq M_{\mathfrak{m}}$, and
\cite[Definition~4.1.7]{Vak} for the definition of the support of a module.\phantom{$(\widetilde{M})_{\mathfrak{m}}$}}, it means that the 
associated quasi-coherent sheaf vanishes in a neighbourhood of $\mathfrak{m}$. 

We list some examples below.

We have seen that the $\mZ$-module $\mZ/2\mZ$ is not flat, and hence it cannot be faithfully flat. Nevertheless,  
$\mathfrak{m}(\mZ/2\mZ)\!=\!\mZ/2\mZ$ for some 
 maximal ideal $\mathfrak{m}$ of $\mZ$. Indeed, the maximal ideals of $\mZ$ are the
principal ideals $\langle p\rangle$ generated by primes $p$. If
$p\!\neq \!2$,  then $p$ is odd, and $p\!\equiv\! 1\!\! \pmod 2$. Therefore  $p\!\cdot\! [1]\!=\![1]$ and $p\!\cdot\! [0]\!=\![0]$, and since $[0]$ and $[1]$ are the only elements of $\mZ/2\mZ$, we obtain $\mathfrak{m}(\mZ/2\mZ)\!=\!\mZ/2\mZ$, where $\mathfrak{m}\!=\!\langle p\rangle$.

On the other hand, consider the $\mathbb Z$-module $
M\!=\!\mZ\oplus(\mZ/2\mZ)$. This module is not flat, 
as there exists a torsion element: $2\cdot (0,[1])\!=\!(0,[0])$. 
However, for every prime $p$, 
$\langle p\rangle M
\!=\!
\langle p\rangle\oplus
\langle p\rangle(\mZ/2\mZ)
\!\subsetneq\!
\mZ\oplus(\mZ/2\mZ),
$ 
and hence there is no maximal ideal $\mathfrak{m}$ of $\mZ$ such
that $\mathfrak{m}M\!=\!M$.

If $V$ is a nonzero complex vector space, regarded as a
$\mC$-module, then the only maximal ideal in the field $\mC$ is
$\mathfrak{m}\!=\!\{0\}$. Thus 
$\mathfrak{m} V\!=\!\{0\}\!\subsetneq \!V$.

If $R\!=\!\mC[z]$ and $M\!=\!\calO(\mC)$, then  for each maximal ideal
$\mathfrak{m}$ of $\mC[z]$, we have 
$\mathfrak{m}\calO(\mC)\!\subsetneq \!\calO(\mC)$. Indeed, 
by the Hilbert nullstellensatz, every maximal ideal of $\mC[z]$ has the form
$\langle z\!-\!\lambda\rangle\!:=\!\{(z\!-\!\lambda)p\!:\!p\in \mC[z]\}$
for some $\lambda \in \mC$. Every function in
$\langle z-\lambda\rangle \calO(\mC)$ vanishes at $\lambda$, but the constant function $1\in \mathcal{O}(\mC)$ does not.

If $\calM$ is a smooth manifold, $R\!=\!C^\infty(\calM)$, $M\!=\!\chi(\calM)$,
then for every maximal ideal $\mathfrak{m}$ of $C^\infty(\calM)$, we have 
$\mathfrak{m}\,\chi(\calM)\subsetneq \chi(\calM)$. This follows from the fact that $\chi(\mathcal{M})$ is a finitely-generated projective $C^\infty(\mathcal{M})$-module. 
Indeed, by Nakayama's lemma, it suffices to show that 
$$
\textstyle 
\operatorname{Ann}_{C^\infty(\mathcal{M})}(\chi(\mathcal{M}))\!=\!\{0\},
$$ 
where
 $
\textstyle 
\operatorname{Ann}_{C^\infty(\mathcal{M})}(\chi(\mathcal{M}))
\!:=\!
\{f\in C^\infty(\mathcal{M}):fX\!=\!0\text{ for all }X\in\chi(\mathcal{M})\}.
$ 
Let $f\in C^\infty(\mathcal{M})$ be not identically zero. Then there exists
$p\in\mathcal{M}$ such that $f(p)\neq0$. Choose a chart
$(U,\mathbf{x})$ containing $p$, and let
$\varphi\in C^\infty(\mathcal{M})$ be a cut-off/bump function satisfying
$\varphi\!\equiv\! 1$ in a neighbourhood $V$ of $p$ and
$\varphi\!\equiv\! 0$ outside a neighbourhood $W$ of $p$, with
$V\!\subset\!\overline{W}\!\subset\! U$. 
Denote the first component of the chart map
$\mathbf{x}:U\to \mR^d$ by $x_1:U\to \mR$. Let 
${\scaleobj{0.93}{\frac{\partial\hfill }{\partial x_1}}}\in \chi(U)$
be the corresponding chart-induced local vector field. Then
 $\varphi x_1\in C^\infty(\calM)$, $X\!:=\!\varphi {\scaleobj{0.93}{\frac{\partial\hfill }{\partial  x_1}}}\in \chi(\calM)$,
and we have

\vspace{0.06cm}

$
\textstyle 
\begin{array}{rcl}
((f X)(\varphi x_1))(p)\!=\!f(p)X_p(\varphi x_1)
  \!\!\!\!\!&=&\!\!\!\!
  f(p)X_p(x_1)
  \!=\!f(p)\varphi(p)(\frac{\partial\hfill }{\partial x_1})_p x_1
\\[0.06cm]
\!\!\!\!\!&=&\!\!\!\!
f(p) \!\cdot\! 1\!\cdot\! 1\!\neq\! 0.
\end{array}
$

\vspace{0.06cm}

\noindent 
Hence $fX\!\neq \!0$, and therefore $\textstyle \operatorname{Ann}_{C^\infty(\mathcal{M})}(\chi(\mathcal{M}))\!=\!\{0\}$.  

\goodbreak 

\noindent 
If $R\!=\!C_{\text{b}}(\mR)$ and $M\!=\!\textstyle C_{\text{poly}}(\mR)$, then
there exists a maximal ideal $\mathfrak{m}$ of $C_{\text{b}}(\mR)$
such that $\mathfrak{m} C_{\text{poly}}(\mR)\!=\!C_{\text{poly}}(\mR)$. Indeed, consider the ideal 
$$
\textstyle 
\mathfrak{i}:=\Big\{g\in C_{\text{b}}(\mR):
\lim\limits_{x\to \infty} g(x)\text{ and }\lim\limits_{x\to -\infty} g(x)\text{ exist and equal }0\Big\}
$$ 
of $C_{\text{b}}(\mR)$. It is a proper ideal of $C_{\text{b}}(\mR)$, and therefore  is contained in
some maximal ideal $\mathfrak{m}$. Now let 
$f\in C_{\text{poly}}(\mR)$. Then we can write $f\!=\!gh$, where
$g\!:=\!\frac{1}{1+x^2}\in \mathfrak{i}\subset \mathfrak{m}$ and
$h\!:=\!(1+x^2) f\in C_{\text{poly}}(\mR)$. Since $f\in C_{\text{poly}}(\mR)$  was arbitrary, we conclude that  $\mathfrak{m} C_{\text{poly}}(\mR)\!=\!C_{\text{poly}}(\mR)$. 

The unfaithfulness of some flat modules arising in analysis was
investigated in \cite{Sas}. In particular, the following results
were obtained:
\begin{itemize}
\item[${\scaleobj{0.81}{\bullet}}$]
  $\pmb{(}R\!=\!L^\infty(X,\mu)$ and $M\!=\!L^2(X,\mu).\pmb{)}$
  
  \noindent 
  Suppose that $X$ is a locally compact Hausdorff topological space,
  equipped with a  positive Radon measure $\mu$. Let $X\!=\!\bigcup_{n=1}^\infty U_n$, 
  where $(U_n)_{n\in \mN}$  is an increasing sequence of Borel sets $U_n$,
   such that $\overline{U_n}$ is compact and $\mu(U_{n+1}\setminus U_n)\!>\!0$ for every  
   $n\in {\mathbb{N}}$. Then
  there exists a maximal ideal $\mathfrak{m}$ of $L^\infty(X,\mu)$
  satisfying $\mathfrak{m} L^2(X,\mu)\!=\!L^2(X,\mu)$. This applies, for example, when 
   $X\!=\!\mN$ with the counting measure (so that $R\!=\!\ell^\infty$,
  $M\!=\!\ell^2$), and when $X\!=\!\mT\!:=\!\{z\in \mC\!:\!|z|\!=\!1\}$ with the Lebesgue
  measure (so that $R\!=\!L^\infty(\mT)$ and $M\!=\!L^2(\mT)$).

\vspace{0.12cm} 

\item[${\scaleobj{0.81}{\bullet}}$] $\pmb{(}R\!=\!H^\infty$ and $M\!=\!H^2.\pmb{)}$ 
  
  \noindent There exists a maximal ideal $\mathfrak{m}$ of $H^\infty$ such that
  $\mathfrak{m} H^2\!=\!H^2$.
\end{itemize}
Recall that the Bergman space $A^2$ is the Hilbert space of all $f\in
\mathcal{O}(\mD)$ such that
$$
\textstyle 
\|f\|_{A^2}:=\sqrt{\int_{\mD}|f(z)|^2\,dA(z)}<\infty,
$$
where $dA$ is the area measure on $\mD$. 
Equipped with pointwise multiplication, $A^2$ is  an $H^\infty$-module, since 
if $g\in H^\infty$ and $f\in A^2$, then
$$
\textstyle \|g\!\cdot\! f\|_{A^2}\le \|g\|_\infty\|f\|_{A^2}.
$$ 
In light of the known result that the flat $H^\infty$-module $H^2$ 
 is not faithfully flat, it is natural to ask whether the same phenomena occur for the $H^\infty$-module $A^2$. In this article, we answer the following questions:
\begin{itemize}
\item[${\scaleobj{0.81}{\bullet}}$]
  Is  the $H^\infty$-module $A^2$ flat? 

\noindent No (Theorem~\ref{thm_1}).

\vspace{0.1cm}

\item[${\scaleobj{0.81}{\bullet}}$]
  Does there exist a maximal ideal $\mathfrak{m}$ of $H^\infty$ such that 
$\mathfrak{m} A^2\!=\!A^2$? 

\noindent Yes (Theorem~\ref{thm_2}).
\end{itemize} 
Theorem~\ref{thm_1}  is proved in Section~\ref{section_not_flat}, and Theorem~\ref{thm_2}
is proved in Section~\ref{section_not_faithful}. The
examples in the introduction are summarised in the following table.
\begin{center}
{\scaleobj{0.93}{
\begin{tabular}{|c|c|c|c|c|}
\hline
\makecell{$R$} &
\makecell{$M$} &
\makecell{Flat?} &
\makecell{Faithfully flat?} &
\makecell{$\exists$ maximal $\mathfrak{m}\subset R$\\
such that $\mathfrak{m}M\!=\!M$?}
\\ \hline \hline
$\mZ$ & $\mZ/2\mZ$ &  No  & Not applicable & Yes \\ \hline
$\mZ$ & $\mZ\oplus (\mZ/2\mZ)$ &  No  & Not applicable & No \\ \hline  
$\mC$ & $V$ & Yes & Yes & No \\ \hline
$\mC[z]$ & $\calO(\mC)$ & Yes & Yes & No\\ \hline
$C^\infty(\calM)$ & $\chi(M)$ & Yes & Yes & No \\ \hline
$C_{\text{b}}(\mR)$ & $C_{\text{poly}}(\mR)$ & Yes & No & Yes \\ \hline
$L^\infty(X,\mu)$ & $L^2(X,\mu)$ & Yes & No & Yes \\ \hline 
$H^\infty$ & $H^2$ & Yes & No & Yes \\ \hline
 $H^\infty$ & $A^2$ & No & Not applicable & Yes \\ \hline
\end{tabular} }}
\end{center}

\goodbreak
 
\section{The $H^\infty$-module $A^2$ is not flat} 
\label{section_not_flat}
 
\noindent By the matricial characterisation of flatness recalled in the
introduction, given $r_1,\cdots,r_n\in H^\infty$ and $f_1,\cdots,f_n\in A^2$
satisfying the relation
$$
\textstyle \sum\limits_{i=1}^n r_i f_i = 0,
$$
we ask whether there always exist $k\in\mathbb N$,
$\theta_{ij}\in H^\infty$ ($1\!\le\! i\!\le\! n$, $1\!\le\! j\!\le\! k$), and
$\mu_1,\cdots,\mu_k\in A^2$ such that
$$
\textstyle
\begin{array}{ll}
f_i = \sum\limits_{j=1}^k \theta_{ij} \mu_j
&\!\! \text{for all }i\in \{1,\cdots,n\}, 
\text{ and }\\[0.12cm]
\sum\limits_{i=1}^n r_i\theta_{ij}=0
&\!\! \text{for all }j\in \{1,\cdots,k\}.
\end{array}
$$
We will show that this fails already for $n\!=\!2$.

\subsection{An algebraic reduction}$\;$
\label{subsection_algebraic_reduction}

\noindent For $a\!\in\!\mD$, define the disc automorphism $b_a$ by 
$$
\textstyle
b_a(z) := \frac{a-z}{1-\bar az}\,\text{ for all }z\in \mD.
$$
Then $b_a$ is an involution: $b_a\circ b_a \!=\! \mathrm{id}_\mD$. For $a\in \mD$, set 
$$
\textstyle 
\epsilon_a:=
\bigg\{ \begin{array}{cl} 
\frac{|a|}{a}&\text{if }a\!\neq\! 0,\\[0.1cm]
1 &\text{if }a\!=\!0.
\end{array}
$$ 
Then $\epsilon_a\in \mT$ for all $a\in \mD$, and we recall that this unimodular constant is the normalising factor used in the definition of a Blaschke product (see, e.g., \cite[Lemma, p.64]{Hof}, \cite[Chapter~II, Theorem~2.2]{Gar}): it does not affect the modulus of $\epsilon_a b_a$, since $|\epsilon_a b_a(z)|=|b_a(z)|$ for all $z\in \mD$, but it is needed  to guarantee convergence of an infinite product of such factors. 

\begin{lemma}[Division in $H^\infty$ by a Blaschke product]$\;$
\label{lem:boundedquotient}

\noindent 
Let $(a_n)_{n\in \mN}$  in $\mD$ and $(m_n)_{n\in \mN}$  in $\mN$ satisfy
 $$
\textstyle \sum\limits_{n=1}^\infty  m_n(1-|a_n|)<\infty. 
$$
\noindent Let $r$ be the Blaschke product
 $$
 \textstyle r:=\prod\limits_{n=1}^\infty (\epsilon_{a_n} b_{a_n})^{m_n}.
$$
\noindent  
If  $f\in H^\infty$ is such that $\frac{f}{r}$
extends to a holomorphic function on $\mD,$ 
 
\noindent then $\frac{f}{r}\in H^\infty$
and $\|\frac{f}{r}\|_\infty\le\|f\|_\infty$.
\end{lemma}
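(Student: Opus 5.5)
Let $g:=\frac{f}{r}$, which by hypothesis extends to a holomorphic function on $\mD$. The plan is a maximum-modulus argument on finite Blaschke products, followed by a limit. For $N\in\mN$ let
$$
\textstyle r_N:=\prod\limits_{n=1}^N (\epsilon_{a_n} b_{a_n})^{m_n},
$$
a finite Blaschke product. It is continuous on $\overline{\mD}$ with $|r_N|=1$ on $\mT$, and its zeros in $\mD$ are exactly the $a_n$ ($n\le N$) with multiplicities at least $m_n$. The multiplicity could be larger if some $a_n$ are repeated, but then $r$ has the correspondingly larger multiplicity too. Since $r=r_N\cdot\rho_N$ with $\rho_N$ the tail product, which is holomorphic on $\mD$, the function $g_N:=\frac{f}{r_N}=g\,\rho_N$ is holomorphic on $\mD$.

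First I will show $\|g_N\|_\infty\le \|f\|_\infty$. For $0<s<1$ close to $1$, the function $|r_N|$ is at least $1-\delta(s)$ on the circle $|z|=s$, where $\delta(s)\to0$ as $s\to1$. This holds because $r_N$ is continuous on $\overline{\mD}$, unimodular on $\mT$, and uniformly continuous there. By the maximum modulus principle on $\{|z|\le s\}$,
$$
\textstyle |g_N(z)|\le \frac{\|f\|_\infty}{1-\delta(s)}\quad(|z|\le s).
$$
Letting $s\to1$ gives $|g_N|\le\|f\|_\infty$ on $\mD$.

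Next I pass to the limit $N\to\infty$. The Blaschke condition $\sum m_n(1-|a_n|)<\infty$ guarantees, by the standard convergence theorem for Blaschke products (the reason the normalisation $\epsilon_{a_n}$ is included, as cited above), that $r_N\to r$ locally uniformly on $\mD$. Hence $\rho_N\to 1$ locally uniformly, and so $g_N=g\rho_N\to g$ pointwise on $\mD$. Concretely, $\rho_N$ is a Blaschke product over the tail $n>N$, and its modulus at a fixed $z$ tends to $1$ by the convergence theorem applied to the tail. It follows that $|g(z)|=\lim_N|g_N(z)|\le\|f\|_\infty$ for each $z\in\mD$, so $g\in H^\infty$ with $\|g\|_\infty\le\|f\|_\infty$.

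The main point requiring care is the limit step. It needs $r_N\to r$ locally uniformly, or at least pointwise, with $r$ nonvanishing off the zero set. This follows from $\sum |1-\epsilon_a b_a(z)|\lesssim \sum m_n(1-|a_n|)$ uniformly on compacta, the classical estimate behind the convergence of Blaschke products, which I would quote rather than reprove. At zeros of $r$ the limit argument is not needed, since the bound extends by continuity of $g$.
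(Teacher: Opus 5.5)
Your proof is correct and follows the same route as the paper: factor $r=r_N\rho_N$ with $r_N$ a finite Blaschke product, show $|f/r_N|\le\|f\|_\infty$ on $\mD$, and let $N\to\infty$ using $\rho_N(z)\to 1$. The only difference is how you bound $f/r_N$. You use the maximum modulus principle on the circles $|z|=s$, where $|r_N|\to 1$ uniformly as $s\to 1$. The paper instead first shows $f/r_N\in H^\infty$ and then compares a.e.\ radial boundary values on $\mT$, so your version is slightly more elementary because it avoids boundary values altogether.
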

\begin{proof}
Set $g\!:=\!\frac{f}{r}$. By assumption, $g\in \calO(\mD)$. We have $f\!=\!rg$.
For each $N\in \mN$, write $r\!=\!r_Ns_N$, where
$$
\textstyle 
\begin{array}{rcl}
r_N(z)\!\!\!\!\!&:=&\prod\limits_{n=1}^N\;(\epsilon_{a_n}b_{a_n}(z))^{m_n} 
\; \text{ (finite Blaschke product), and}\\[0.33cm]
\textstyle
s_N(z)\!\!\!\!\!&:=&\!\!\!\prod\limits_{n=N+1}^\infty \!\!(\epsilon_{a_n}b_{a_n}(z))^{m_n} \;\;
\text{(tail)}.
\end{array}
$$
Since 
$$
\textstyle \sum_{n=N+1}^{\!\!\!\infty} m_n(1-|a_n|)<\infty,
$$
 $s_N$ converges uniformly on compact subsets of $\mD$ (see, for example,  \cite[Lemma,~p.64]{Hof}), and defines a holomorphic function there. Hence $f\!=\!r_N(s_Ng)$, where $s_Ng$ is 
holomorphic on $\mD$. Thus  $ g_N\!:=\!\frac{f}{r_N}\!=\!s_Ng$ is
holomorphic on $\mD$.

We claim that $g_N\in H^\infty$. Since $r_N$ is a finite Blaschke
product, it is a rational function whose only poles lie at the points
$1/\bar a_n$ ($1\!\le\! n\!\le\! N$), each of modulus $>\!1$. 
Let $\delta\!>\!0$ and $\delta<1-\max\{|a_n|:1\!\le\! n\!\le\! N\}$. Then $r_N$ has no zero in the closed annulus $\overline
A_\delta\!:=\!\{z\in \mC:1-\delta\!\le\!|z|\!\le\!1\}$.
Since $r_N$ is continuous and nonvanishing on the compact set $\overline A_\delta$,
$$
\textstyle 
m_\delta:=\min\limits_{z\in\overline A_\delta}|r_N(z)| > 0.
$$
Hence
$$
\textstyle
|g_N(z)|=\frac{|f(z)|}{|r_N(z)|}\le\frac{\|f\|_\infty}{m_\delta}
\text{ for all }z\in\overline A_\delta, 
$$
while on the compact disc $\{z\in \mC: |z|\le1-\delta\}$,
$g_N$ is continuous, and hence bounded.
Thus $g_N$ is bounded on $\mD$, i.e.\ $g_N\in H^\infty$.

Since $|r_N(\zeta)|\!=\!1$ for all $\zeta\in \mT$ (as $r_N$ is a finite
Blaschke product), the radial boundary values of $g_N$, which exist
almost everywhere, satisfy
$$
\textstyle 
|g_N(\zeta)|=\frac{|f(\zeta)|}{|r_N(\zeta)|}
=\frac{|f(\zeta)|}{1}
=|f(\zeta)| \le \|f\|_\infty\text{ for almost all }\zeta\in \mT.
$$
Since $g_N\in H^\infty$, we get $\|g_N\|_\infty \le \|f\|_\infty$
for all $N\in \mN$.

Fix $z\in\mD$. Then
$|s_N(z)g(z)|\!=\!|g_N(z)|\!
\le\! \|g_N\|_\infty
\!\le\! \|f\|_\infty$ for all $N\in \mN$.
Since the tail satisfies $s_N(z)\to 1$ as $N\to\infty$, we get
$|g(z)|\le\|f\|_\infty$. As $z\in\mD$ was arbitrary, $g\in H^\infty$
and $ \|g\|_\infty\le\|f\|_\infty$.
 \end{proof}

\begin{proposition}[Free relation module of rank $1$]$\;$
\label{prop:cyclic}

\noindent 
Suppose $r_1,r_2$ are Blaschke products with disjoint zero sets
$Z_1,Z_2\subset\mD,$  respectively. Then
$$
\textstyle
\begin{array}{rcl}
  M \!\!\!\!&:=&\!\!\!\!
  \{ (\theta_1,\theta_2)\in H^\infty\times H^\infty :
  r_1\theta_1+r_2\theta_2 = 0 \} \\[0.1cm]
\!\!\!\!&=&\!\!\!\!  \{ h\!\cdot \!(-r_2,r_1): h\in H^\infty\}.
\end{array}
$$
\end{proposition}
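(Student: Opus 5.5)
The inclusion $\supseteq$ is immediate: for $h\in H^\infty$ we have $r_1(-hr_2)+r_2(hr_1)=0$, and $-hr_2, hr_1\in H^\infty$ because Blaschke products are bounded by $1$. So the work is in the reverse inclusion $\subseteq$, and I will derive it from Lemma~\ref{lem:boundedquotient}.

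Let $(\theta_1,\theta_2)\in M$, so that $r_1\theta_1=-r_2\theta_2$ on $\mD$. I will first show that $\theta_2/r_1$ extends holomorphically to $\mD$. Write $r_1=\prod_n(\epsilon_{a_n}b_{a_n})^{m_n}$ with distinct points $a_n$, so that $Z_1=\{a_n\}$ and $a_n$ is a zero of $r_1$ of exact multiplicity $m_n$. Such a point $a_n$ is not in $Z_2$, so $r_2(a_n)\neq 0$. From $r_2\theta_2=-r_1\theta_1$, the function $r_2\theta_2$ vanishes at $a_n$ to order at least $m_n$. Since $r_2$ does not vanish there, $\theta_2$ itself vanishes at $a_n$ to order at least $m_n$. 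The zeros of $r_1$ are isolated, so $\theta_2/r_1$ has only removable singularities and extends to some $h\in\calO(\mD)$.

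The convergence condition $\sum m_n(1-|a_n|)<\infty$ required by Lemma~\ref{lem:boundedquotient} holds automatically, since $r_1$ is a Blaschke product. Applying that lemma with $f=\theta_2\in H^\infty$ and $r=r_1$ gives $h\in H^\infty$. We now have $\theta_2=hr_1$. Substituting into the relation yields $r_1\theta_1=-r_2 h r_1$. Since $r_1\not\equiv 0$, its zero set is discrete, so we may cancel $r_1$ on the dense open set where it does not vanish, and continuity gives $\theta_1=-hr_2$ on all of $\mD$. Hence $(\theta_1,\theta_2)=h\cdot(-r_2,r_1)$.

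The only step needing care is the multiplicity bookkeeping in the second paragraph. It has to be set up with the points $a_n$ listed as distinct, with $m_n$ their multiplicities, so that the vanishing order of $r_1$ at $a_n$ is exactly $m_n$. The disjointness $Z_1\cap Z_2=\emptyset$ is used precisely there, to pass the vanishing order from $r_2\theta_2$ to $\theta_2$. Once holomorphy of $\theta_2/r_1$ is established, Lemma~\ref{lem:boundedquotient} does the analytic work of making $h$ bounded.
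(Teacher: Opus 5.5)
Your proof is correct and follows the paper's argument. Disjointness of $Z_1$ and $Z_2$ forces $\theta_2$ to vanish at each zero of $r_1$ to at least that zero's multiplicity, and Lemma~\ref{lem:boundedquotient} then makes $h=\theta_2/r_1$ bounded. The only difference is a harmless simplification: the paper also forms $\theta_1/r_2\in H^\infty$ and shows it equals $-h$, whereas you obtain $\theta_1=-hr_2$ directly by cancelling $r_1$, so the second application of the lemma is not needed.
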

\begin{proof} 
The inclusion $ \{ h\!\cdot\! (-r_2,r_1): h\in H^\infty\}\subset M$ is
immediate, since $r_1(-hr_2)+r_2(hr_1)\!=\!0$. We prove the reverse inclusion.  Suppose that 
$(\theta_1,\theta_2)\in M\setminus \{(0,0)\}$. 
Then $r_1\theta_1 \!=\!-r_2 \theta_2$, and $\theta_1$ is not identically $0$ (otherwise $r_2\theta_2\!=\!-r_1\theta_1\equiv 0$ would imply, thanks to the fact that $r_2$ is not identically $0$, that $\theta_2\!\equiv\! 0$). Similarly, $\theta_2$ is not identically $0$. 
Let $\zeta\in Z_1$,
and let $m$ be its order as a zero of $r_1$.
As $Z_1\cap Z_2\!=\!\emptyset$, we have $r_2(\zeta)\!\neq\! 0$.
Upon comparing the order of the zero $\zeta$ on both sides of 
$r_1\theta_1 \!=\! -r_2 \theta_2$, we conclude that the right-hand side
has $\zeta$ as a zero with at least order $m$.
As $r_2(\zeta)\!\neq\! 0$, it follows that $\theta_2(\zeta)\!=\!0$,
and the order of $\zeta$ as a zero of $\theta_2$ is at least $m$.
Thus $h_1\!:=\!\theta_2/r_1$ has a holomorphic extension to $\mD$.
By Lemma~\ref{lem:boundedquotient},
we have $h_1\!=\!\theta_2/r_1\in H^\infty$.
Similarly, $h_2\!:=\!\theta_1/r_2\in H^\infty$.
Using $r_1\theta_1 \!=\! -r_2 \theta_2$, we get
$r_1(h_2r_2) \!= \!-r_2 (h_1r_1)$ on $\mD$.
As $r_1r_2$ is not identically $0$ in
$\mD$, it follows that $h_2\!=\!-h_1$. Hence $(\theta_1,\theta_2) \!=\! h_1\!\cdot\!(-r_2,r_1)$,
as wanted.
\end{proof}

\begin{corollary}[Collapse of $k$]
\label{cor:collapse}
 Suppose $r_1,r_2$ are Blaschke products with disjoint zero sets $Z_1,Z_2\subset\mD,$  respectively. Then for any  $k\in \mN,$ any $\theta_{ij}\in H^\infty,$ $i\in \{1, 2\},$ $j\in \{1,\cdots ,k\},$  satisfying 
$$
\textstyle 
 r_1\theta_{1j}+ r_2\theta_{2j}=0\text{ \em for  all }j\in \{1,\cdots, k\},
$$ 
and any $\mu_1,\cdots,\mu_k\in A^2,$ there exists a $g\in A^2$ such that
$$
\textstyle 
\sum\limits_{j=1}^k \theta_{1j}\mu_j = -gr_2 \text{ \em and }
\sum\limits_{j=1}^k \theta_{2j}\mu_j = gr_1.
$$
\end{corollary}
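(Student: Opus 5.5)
By Proposition~\ref{prop:cyclic}, each column satisfies $(\theta_{1j},\theta_{2j})\in M$. Hence for every $j\in\{1,\cdots,k\}$ there is an $h_j\in H^\infty$ with $\theta_{1j}=-h_jr_2$ and $\theta_{2j}=h_jr_1$. We then set
$$
\textstyle g:=\sum\limits_{j=1}^k h_j\mu_j .
$$
Since $A^2$ is an $H^\infty$-module, each product $h_j\mu_j$ lies in $A^2$, because $\|h_j\mu_j\|_{A^2}\le\|h_j\|_\infty\|\mu_j\|_{A^2}$. A finite sum of elements of $A^2$ is again in $A^2$, so $g\in A^2$.

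It remains to verify the two identities, and this is a direct computation using pointwise multiplication on $\mD$:
$$
\textstyle \sum\limits_{j=1}^k\theta_{1j}\mu_j=\sum\limits_{j=1}^k(-h_jr_2)\mu_j=-r_2\sum\limits_{j=1}^k h_j\mu_j=-gr_2 ,
$$
and in the same way $\sum_{j}\theta_{2j}\mu_j=r_1 g$. This completes the argument.

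There is no real obstacle here. All the analytic content, namely that the quotients $\theta_{2j}/r_1$ and $\theta_{1j}/r_2$ are bounded, is already contained in Proposition~\ref{prop:cyclic} via Lemma~\ref{lem:boundedquotient}. The one point to state explicitly is that the $h_j$ are in $H^\infty$ and not merely in $\calO(\mD)$. This is exactly what guarantees $g\in A^2$ and not just that $g$ is holomorphic.
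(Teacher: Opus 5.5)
Your proposal is correct and follows the paper's proof exactly: Proposition~\ref{prop:cyclic} gives $h_j\in H^\infty$ with $(\theta_{1j},\theta_{2j})=h_j\cdot(-r_2,r_1)$. Setting $g:=\sum_{j=1}^k h_j\mu_j$, which lies in $A^2$ because multiplication by $H^\infty$ functions is bounded on $A^2$, then yields both identities by direct computation.
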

\begin{proof}
By Proposition~\ref{prop:cyclic}, there exist $h_1,\cdots, h_k\in H^\infty$ 
such that 
$$
\textstyle (\theta_{1j}, \theta_{2j})\!=\!h_j\!\cdot\! (-r_2,r_1) \text{ for all }
j\in \{1,\cdots, k\}.
$$ 
 Since multiplication by an $H^\infty$ element defines a bounded operator on $A^2$, we have that 
$$
\textstyle g:=\sum\limits_{j=1}^k h_j\mu_j\in A^2.
$$

\goodbreak 

\noindent 
Finally, 
\vspace{-0.42cm}
$$
\textstyle 
\begin{array}{rcl}
\sum\limits_{j=1}^k \theta_{1j}\mu_j 
\!\!\!&=&\!\!\!\sum\limits_{j=1}^k h_j (-r_2) \mu_j =-r_2 g,\quad \text{ and }\\[0.15cm]
 \sum\limits_{j=1}^k \theta_{2j}\mu_j 
\!\!\!&=&\!\!\!\sum\limits_{j=1}^k h_j r_1 \mu_j =r_1 g.
\end{array}
$$
This completes the proof. 
\end{proof}

\begin{proposition}[Reduction to a single function]$\;$
\label{prop:bijection}

\noindent Suppose $r_1,r_2$ are Blaschke products with disjoint zero sets $Z_1,Z_2\subset\mD,$ respectively. Set
 $$
\textstyle 
\begin{array}{rcl}
N \!\!\!&:=&\!\!\! \{ (f_1,f_2)\in A^2\times A^2 : r_1f_1+r_2f_2 = 0 \},\text{ and} \\[0.06cm]
G\!\!\!&:=&\!\!\!\{ g \in \calO(\mD ) : r_1 g, r_2 g \in A^2 \}.
\end{array}
$$  
Then  the map $G\owns g \mapsto (r_2 g,\, -r_1 g)$ is a bijection from $G$ 
onto $N$.
\end{proposition}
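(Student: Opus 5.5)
We verify three things in turn: the map $\Phi:G\to N$, $\Phi(g):=(r_2g,-r_1g)$, lands in $N$; it is injective; and it is surjective. The first two are immediate.

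\textbf{Well-definedness.} If $g\in G$, then $r_2g\in A^2$ and $r_1g\in A^2$ by definition of $G$. Moreover
$$
r_1(r_2g)+r_2(-r_1g)=0,
$$
so $\Phi(g)\in N$.

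\textbf{Injectivity.} If $r_2g=r_2g'$ on $\mD$, then $r_2(g-g')\equiv 0$. The Blaschke product $r_2$ is not identically zero, and $\calO(\mD)$ is an integral domain, so $g=g'$.

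\textbf{Surjectivity.} Let $(f_1,f_2)\in N$, so $r_1f_1=-r_2f_2$ on $\mD$. We imitate the zero-counting argument of Proposition~\ref{prop:cyclic}, without the boundedness step.

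Let $\zeta\in Z_2$ have order $m$ as a zero of $r_2$. Since $Z_1\cap Z_2=\emptyset$, we have $r_1(\zeta)\neq 0$. Comparing orders of vanishing at $\zeta$ in $r_1f_1=-r_2f_2$ shows that $f_1$ vanishes at $\zeta$ to order at least $m$. (If $f_1\equiv 0$ there is nothing to check.)

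Hence $g:=f_1/r_2$ has removable singularities at every point of $Z_2$. Away from $Z_2$ it is holomorphic, since $r_2$ vanishes only on $Z_2$, which is discrete in $\mD$ because $r_2\not\equiv 0$. So $g\in\calO(\mD)$ and $r_2g=f_1$.

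It remains to identify $f_2$. We have
$$
r_2(-r_1g)=-r_1f_1=r_2f_2,
$$
and cancelling $r_2$ in the domain $\calO(\mD)$ gives $f_2=-r_1g$. Finally $r_2g=f_1\in A^2$ and $r_1g=-f_2\in A^2$, so $g\in G$ and $\Phi(g)=(f_1,f_2)$.

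\textbf{Remarks.} The only step needing care is the removability of the singularities of $f_1/r_2$. This follows exactly as in Proposition~\ref{prop:cyclic}, using disjointness of the zero sets and the fact that elements of $A^2$ are holomorphic, so orders of zeros make sense.

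Unlike Proposition~\ref{prop:cyclic}, we do not need Lemma~\ref{lem:boundedquotient}: here $g$ is only required to be holomorphic, which is precisely why $G$ is defined as a subset of $\calO(\mD)$ rather than of $A^2$.
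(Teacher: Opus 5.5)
Your proof is correct and follows essentially the same route as the paper: well-definedness is immediate, injectivity comes from the identity theorem, and surjectivity is a removable-singularity argument that uses the disjointness of $Z_1$ and $Z_2$. The only cosmetic difference is in the surjectivity step. The paper sets $g:=F/(r_1r_2)$ with $F=r_1f_1=-r_2f_2$ and checks removability at both $Z_1$ and $Z_2$. You instead take $g:=f_1/r_2$, check removability only at $Z_2$, and then recover $f_2=-r_1g$ by cancelling $r_2$ in the integral domain $\calO(\mD)$.
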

\begin{proof}
If $g\in G$, then  $r_1(r_2g)+r_2(-r_1g)\!=\!0$, and so $(r_2g, -r_1 g)\in N$. Thus the map is well-defined. 

\smallskip 

\noindent (Surjectivity:) If $(f_1,f_2)\!=\!(0,0)$, then set $g\!=\!0$. Let $(f_1,f_2)\in N\setminus \{(0,0)\}$.  Set $F\!:=\!r_1f_1\!=\!-r_2f_2\in \calO(\mD)$. Then $F$ is not identically $0$ (otherwise $r_1f_1\!=\!-r_2f_2\equiv 0$ would imply
$f_1\!=\!f_2\!\equiv\! 0$). 
If $\zeta\in Z_1$, then the order of $\zeta$ as a zero of $F\!=\!r_1f_1$  is at least  its order as a zero of $r_1$. Similarly, if $\zeta\in Z_2$, the order of $\zeta$ as a zero of $F\!=\!-r_2f_2$  is at least  its order as a zero of $r_2$. As $Z_1\cap Z_2\!=\!\emptyset$, we conclude that $g\!:=\!F/(r_1r_2)$ extends to a holomorphic function on $\mD$. We have  $r_2g\!=\!F/r_1\!=\!f_1\in A^2$, and $-r_1g\!=\!-F/r_2\!=\!f_2\in A^2$. Hence $G\owns g \mapsto (r_2 g,\, -r_1 g)\in N$   is surjective. 

\smallskip

\noindent (Injectivity:) The injectivity of the map $G\owns g \mapsto (r_2 g,\, -r_1 g)$  is an immediate consequence of the identity theorem (since for example,  it is not the case that $r_2\!\equiv\! 0$ in $\mD$).
\end{proof}

\noindent Now suppose we can find Blaschke products $r_1,r_2$ with disjoint zero sets and a holomorphic $g\notin A^2$ with $f_1\!:=\!r_2g\in A^2$ and $f_2\!:=\!-r_1g\in A^2$. 
Then $r_1 f_1+r_2f_2\!=\!0$. Suppose that for some $k\in \mN$, for some $\theta_{ij}\in H^\infty$, and some $\mu_1,\cdots, \mu_k \in A^2$, we have 
$$
\textstyle
f_i = \sum\limits_{j=1}^k \theta_{ij}\mu_j \text{ for all }i\in \{1,2\}, 
\text{ and }
\textstyle \sum\limits_{i=1}^2 r_i\theta_{ij}=0\text{ for all }j\in \{1,\cdots k\}.
$$
Applying Corollary~\ref{cor:collapse},  there exists a $\widetilde{g}\in A^2$ such that
$$
\textstyle 
f_1=\sum\limits_{j=1}^k \theta_{1j}\mu_j = -\widetilde{g}r_2 \text{ and }
f_2=\sum\limits_{j=1}^k \theta_{2j}\mu_j = \widetilde{g}r_1.
$$ 
By the injectivity part of Proposition~\ref{prop:bijection}, $g\!=\!-\widetilde{g}$, and so we get the contradiction that $g\!=\!-\widetilde{g}\in A^2$. This will show the claimed non-flatness of the $H^\infty$-module $A^2$. In the next subsection, we will construct the Blaschke products $r_1,r_2$ with disjoint zero sets and a holomorphic $g\notin A^2$ such that  $f_1\!:=\!r_2g\in A^2$ and $f_2\!:=\!-r_1g\in A^2$.

\subsection{Construction of the Blaschke products $r_1,r_2$ and $g\in \calO(\mD)$}
\label{subsection_construction}

\noindent Before proving Proposition~\ref{lem:main}, we collect some preliminary facts and establish four auxiliary lemmas which will be used in the construction.

 Recall that the {\em pseudohyperbolic distance} between $z,w\in\mD$ is  
 $$
\textstyle
\rho(z,w) := |\frac{z-w}{1-\bar w z}|.
$$ 
 Then $\rho(z,w)\!=\!\rho(w,z)$, and for all  $a\in \mD$,  $|b_a(z)|\!=\!\rho(a,z)$. 
For $w\in\mD$, define the (unnormalised) Bergman kernel function
$$
\textstyle
e_w(z) := \frac{1-|w|^2}{(1-\bar w z)^2} \text{ for all } z\in\mD.
$$
We will often use the fact that for all $z,w\in\mD$, 
\begin{equation}
\label{eqnstar}
\textstyle 
1-\rho(z,w)^2 = \frac{(1-|z|^2)(1-|w|^2)}{|1-\bar w z|^2}. \tag{$\star$}
\end{equation}

\begin{lemma}[Estimate for $\|b_a^{m}\|_{A^2}$]$\;$
\label{lem:A}

\noindent For all $a\in\mD$ and all integers $m\!\ge\! 0,$ 
 $
\textstyle 
\int_\mD |b_a(\zeta)|^{2m}\, dA(\zeta) \le \frac{\pi}{m+1} (\frac{1+|a|}{1-|a|})^2.
$ 
\end{lemma}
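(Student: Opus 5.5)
The plan is to change variables by the involution $b_a$, which turns $|b_a|^{2m}$ into $|\zeta|^{2m}$ at the cost of a Jacobian factor, and then to bound that Jacobian uniformly on $\mD$.

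Put $\zeta=b_a(w)$. Since $b_a$ is a holomorphic involution of $\mD$, this is a diffeomorphism of $\mD$ onto itself, and the real Jacobian is $|b_a'(w)|^2$. A direct computation gives
$$
\textstyle b_a'(w)=-\frac{1-|a|^2}{(1-\bar a w)^2},
$$
that is, $|b_a'(w)|=|e_a(w)|$. Using $b_a\circ b_a=\mathrm{id}_\mD$, we get $|b_a(\zeta)|=|w|$, and hence
$$
\textstyle \int_\mD |b_a(\zeta)|^{2m}\,dA(\zeta)=\int_\mD |w|^{2m}\,\frac{(1-|a|^2)^2}{|1-\bar a w|^4}\,dA(w).
$$

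The only remaining step, and the only one needing any care, is to bound the kernel factor uniformly in $w\in\mD$. Since $|1-\bar a w|\ge 1-|a||w|\ge 1-|a|$, we have
$$
\textstyle \frac{(1-|a|^2)^2}{|1-\bar a w|^4}\le \frac{(1-|a|)^2(1+|a|)^2}{(1-|a|)^4}=\Big(\frac{1+|a|}{1-|a|}\Big)^2 .
$$
This constant can be pulled out of the integral. (The bound is crude, but it is exactly the form the lemma asks for.)

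Finally, in polar coordinates,
$$
\textstyle \int_\mD |w|^{2m}\,dA(w)=2\pi\int_0^1 t^{2m+1}\,dt=\frac{\pi}{m+1}.
$$
Multiplying this by the constant from the previous step gives the claimed estimate. The case $m=0$ is included, since then the bound reads $\pi\big(\frac{1+|a|}{1-|a|}\big)^2\ge\pi$. There is no real obstacle; one only needs to be careful to use the modulus of the Jacobian, $|b_a'|^2$, in the change of variables.
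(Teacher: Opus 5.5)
Your proposal is correct and follows the paper's proof essentially step for step. You change variables by the involution $b_a$ with Jacobian $|b_a'|^2$, bound $|b_a'|\le \frac{1+|a|}{1-|a|}$ using $|1-\bar a w|\ge 1-|a|$, and evaluate $\int_\mD |w|^{2m}\,dA(w)=\frac{\pi}{m+1}$ in polar coordinates.
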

\begin{proof}
As $b_a$ is an involutive automorphism of $\mD$, the change of variables $\zeta\!=\!b_a(\eta)$ (so $dA(\zeta)\!=\!|b_a'(\eta)|^2\,dA(\eta)$, where $b_a'(\eta) \!=\! -\frac{1-|a|^2}{(1-\bar a\eta)^{2}}$) yields
$$
\textstyle 
\int_\mD |b_a(\zeta)|^{2m}\,dA(\zeta) =\int_\mD |\eta|^{2m} |b_a'(\eta)|^2\, dA(\eta).
$$
For $\eta\in\mD$, we have $|1-\bar a\eta| \!\ge\! 1-|a|$, and so 
$$
\textstyle |b_a'(\eta)| \le \frac{1-|a|^2}{(1-|a|)^2} = \frac{1+|a|}{1-|a|}.
$$
 The proof is completed noting that $\int_\mD |\eta|^{2m}\,dA(\eta) \!=\! \frac{\pi}{m+1}$ (which is seen easily using polar coordinates).
\end{proof}

\begin{lemma}
\label{lem:B}
For all $w\in\mD$ and all  $\varphi \in H^\infty ,$  
$$
\textstyle
\|\varphi e_w\|_{A^2}^2 = \int_\mD |\varphi(z)|^2 |e_w(z)|^2\, dA(z) = \int_\mD |\varphi(b_w(\eta))|^2 \,dA(\eta).
$$
\end{lemma}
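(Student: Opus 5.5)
The first equality in the statement of Lemma~\ref{lem:B} is just the definition of the $A^2$ norm. For the second, I will change variables $z\!=\!b_w(\eta)$, using that $b_w$ is an involutive automorphism of $\mD$, exactly as in the proof of Lemma~\ref{lem:A}. This gives
$$
\textstyle
\int_\mD |\varphi(z)|^2|e_w(z)|^2\,dA(z)=\int_\mD |\varphi(b_w(\eta))|^2\,|e_w(b_w(\eta))|^2\,|b_w'(\eta)|^2\,dA(\eta).
$$
So the whole proof reduces to the pointwise identity $|e_w(b_w(\eta))|\,|b_w'(\eta)|=1$ for all $\eta\in\mD$.

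To verify this identity, I first compute
$$
\textstyle
1-\bar w\, b_w(\eta)=\frac{(1-\bar w\eta)-\bar w(w-\eta)}{1-\bar w\eta}=\frac{1-|w|^2}{1-\bar w\eta}.
$$
Hence
$$
\textstyle
e_w(b_w(\eta))=\frac{1-|w|^2}{(1-|w|^2)^2/(1-\bar w\eta)^2}=\frac{(1-\bar w\eta)^2}{1-|w|^2}.
$$
From the proof of Lemma~\ref{lem:A}, $b_w'(\eta)=-\frac{1-|w|^2}{(1-\bar w\eta)^2}$. The product of the two moduli is therefore exactly $1$, and substituting this into the displayed integral yields the claimed formula.

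Measurability and nonnegativity of the integrands make the change of variables legitimate, whether the integrals are finite or not. In fact both sides are at most $\pi\|\varphi\|_\infty^2$, since $\varphi\in H^\infty$, so finiteness is automatic. No step here is a real obstacle; the only point needing care is the algebraic simplification of $1-\bar w\,b_w(\eta)$ above.
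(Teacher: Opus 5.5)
Your proposal is correct and takes essentially the same approach as the paper: the change of variables $z=b_w(\eta)$, the identity $1-\bar w\,b_w(\eta)=\frac{1-|w|^2}{1-\bar w\eta}$, and the Jacobian $|b_w'(\eta)|^2=\frac{(1-|w|^2)^2}{|1-\bar w\eta|^4}$, which cancels $|e_w(b_w(\eta))|^2$ exactly. Your remarks on nonnegativity and the bound $\pi\|\varphi\|_\infty^2$ are correct additions that the paper leaves implicit.
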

\begin{proof}
We use the change of variables $z\!=\!b_w(\eta)$. As 
$$
\textstyle 1-\bar w\, b_w(\eta) = \frac{1-|w|^2}{1-\bar w\eta}, 
\text{ and }dA(z) = \frac{(1-|w|^2)^2}{|1-\bar w\eta|^4}\, dA(\eta),
$$ 
we obtain
$$
\textstyle 
{\scaleobj{0.96}{
\begin{array}{rcl}
\int_\mD |\varphi(z)|^2 |e_w(z)|^2\,dA(z) 
\!\!\!&=&\!\!\! \int_\mD |\varphi(z)|^2 \frac{(1-|w|^2)^2}{|1-\bar w z|^4}\, dA(z) 
\\[0.3cm]
\!\!\!&=&\!\!\! \int_\mD |\varphi(b_w(\eta))|^2 
\frac{(1-|w|^2)^2}{|1-\bar w b_w(\eta)|^4} \frac{(1-|w|^2)^2}{|1-\bar w\eta|^4}\,dA(\eta)
\\[0.3cm]
\!\!\!&=&\!\!\! \int_\mD |\varphi(b_w(\eta))|^2 
\frac{(1-|w|^2)^2}{\frac{(1-|w|^2)^4}{|1-\bar w \eta|^4}} \frac{(1-|w|^2)^2}{|1-\bar w\eta|^4}\,dA(\eta)
\\[0.36cm]
\!\!\!&=&\!\!\!
\int_\mD |\varphi(b_w(\eta))|^2 \,dA(\eta),
\end{array}}}
$$
as wanted.
\end{proof}

\begin{lemma}[Composition of Blaschke factors]$\;$
\label{lem:C}

\noindent 
For every $a_1,a_2\in\mD,$ there exist  $\alpha\in \mT$ and  $\zeta\in\mD$ with $|\zeta|\!=\!\rho(a_1,a_2)$ such that $
b_{a_1} \!\circ b_{a_2}\!=\! \alpha b_{\zeta}$ in $\mD$.
\end{lemma}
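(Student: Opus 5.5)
The plan is to use the structure of the automorphism group of $\mD$. The composition $\phi:=b_{a_1}\circ b_{a_2}$ is a holomorphic bijection of $\mD$ onto itself. It therefore has exactly one zero in $\mD$, namely $\zeta:=b_{a_2}(a_1)$: since $b_{a_2}$ is an involution, $b_{a_1}(b_{a_2}(z))=0$ iff $b_{a_2}(z)=a_1$ iff $z=b_{a_2}(a_1)$. Its modulus is $|\zeta|=|b_{a_2}(a_1)|=\rho(a_2,a_1)=\rho(a_1,a_2)$, using the identity $|b_a(z)|=\rho(a,z)$ and the symmetry of $\rho$ recalled above. So the point $\zeta$ and the claim about $|\zeta|$ come essentially for free.

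It remains to show that $\phi=\alpha b_\zeta$ for some $\alpha\in\mT$. I would consider $\psi:=\phi\circ b_\zeta$, again using that $b_\zeta$ is an involution. Then $\psi$ is an automorphism of $\mD$ with $\psi(0)=\phi(\zeta)=0$. By the Schwarz lemma applied to $\psi$ and to $\psi^{-1}$, we get $|\psi(z)|\le|z|$ and $|z|\le|\psi(z)|$. Hence $|\psi(z)|=|z|$ on $\mD$, and the equality case of the Schwarz lemma gives $\psi(z)=\alpha z$ with $\alpha\in\mT$. Composing with $b_\zeta$ on the right yields $\phi=\psi\circ b_\zeta=\alpha b_\zeta$, as wanted.

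As an alternative that avoids the Schwarz lemma, one can compute directly. Writing $b_{a_1}(b_{a_2}(z))$ as a ratio of linear polynomials in $z$, clear the denominator $1-\bar a_2 z$ to get
\[
\frac{(a_1-a_2)-(1-\bar a_2 a_1)z\cdot\frac{1-\bar a_1 a_2}{1-\bar a_2 a_1}\cdot\frac{1-\bar a_2a_1}{1-\bar a_1a_2}\cdots}{\cdots}.
\]
Factoring out the numerator constant $1-a_1\bar a_2$ then shows the expression is $\alpha\,\frac{\zeta-z}{1-\bar\zeta z}$, with $\alpha=\frac{1-a_1\bar a_2}{1-\bar a_1 a_2}$ (unimodular) and $\zeta=\frac{a_2-a_1}{1-\bar a_2a_1}\cdot(\text{sign})$. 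This bookkeeping is the only mildly fiddly step, which is why I prefer the Schwarz-lemma route. The only point needing care there is confirming that $\phi$ is a bijective automorphism, so that $\psi^{-1}$ exists and maps $0$ to $0$; this is immediate because each $b_a$ is an involutive automorphism.
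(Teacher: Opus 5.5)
Your main argument is correct and is essentially the paper's proof. Both take $\zeta=b_{a_2}(a_1)=\sigma^{-1}(0)$, the unique zero of the composition, and appeal to the classification of disc automorphisms; the paper cites that classification, while you prove it inline by applying the Schwarz lemma to $\psi$ and $\psi^{-1}$. Your sketched direct computation is garbled as written and puts the sign in the wrong place: in fact $b_{a_1}\circ b_{a_2}=\alpha b_\zeta$ with $\zeta=b_{a_2}(a_1)$ exactly and $\alpha=-\frac{1-a_1\bar a_2}{1-\bar a_1 a_2}$, but since you don't rely on that route, this doesn't affect the proof.
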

\begin{proof}
Set $\sigma\!:=\!b_{a_1}\!\circ b_{a_2}$. Since $\sigma$ is an automorphism of $\mD$, 
 the characterisation of automorphisms of the disc (see, e.g., \cite[Theorem~13.15]{BakNew}) 
yields  $\sigma\!=\! \alpha b_\zeta$ for some  $\alpha\in \mT$, where $\zeta\!:=\!\sigma^{-1}(0)$. Since both $b_{a_1}$ and $b_{a_2}$ are involutions, $\sigma^{-1}\!=\!b_{a_2}\!\circ b_{a_1}$. Thus $\zeta \!=\! b_{a_2}(b_{a_1}(0)) \!=\! b_{a_2}(a_1)$, and so $|\zeta| \!=\! |b_{a_2}(a_1)|\!=\!\rho(a_1,a_2)$. 
\end{proof}

\begin{lemma}[Reproducing formula]$\;$
\label{lem:D}

\noindent For all $h\in A^2$ and all $w\in\mD,$
 $
\textstyle
\langle h, e_w\rangle_{A^2} = \pi(1-|w|^2)\, h(w).
$
 
\noindent For all $v,w\in\mD,$
 $ 
\textstyle
|\langle e_v,e_w\rangle_{A^2}| = \pi(1-\rho(v,w)^2),
$  
and $\|e_w\|_{A^2}^2 = \pi$.
\end{lemma}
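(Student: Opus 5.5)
The plan is to first establish the reproducing property $\langle h, K_w\rangle_{A^2} = h(w)$ for the Bergman kernel $K_w(z)=\frac{1}{\pi(1-\bar w z)^2}$ (with respect to unnormalised area measure $dA$). Since $e_w = \pi(1-|w|^2)K_w$ and the inner product is conjugate-linear in the second slot, the first formula follows at once:
$$\langle h,e_w\rangle = \pi(1-|w|^2)\langle h,K_w\rangle = \pi(1-|w|^2)h(w),$$
because the scalar $\pi(1-|w|^2)$ is real.

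To prove the reproducing property, I will expand $h(z)=\sum_n c_n z^n$. Using polar coordinates, one has $\langle z^n,z^m\rangle_{A^2} = \frac{\pi}{n+1}\delta_{nm}$, the same computation as at the end of Lemma~\ref{lem:A}. Next,
$$\frac{1}{(1-\bar w z)^2}=\sum_n (n+1)\bar w^n z^n,$$
which converges in $A^2$ for fixed $w\in\mD$, since $\sum (n+1)^2|w|^{2n}\frac{\pi}{n+1}<\infty$. Then by orthogonality
$$\langle h, \tfrac{1}{(1-\bar w\,\cdot)^2}\rangle = \sum_n c_n (n+1) w^n \tfrac{\pi}{n+1} = \pi h(w).$$
Here the series $\sum |c_n|^2/(n+1)$ converges because $\|h\|_{A^2}^2=\pi\sum |c_n|^2/(n+1)$, so Cauchy–Schwarz justifies the termwise pairing. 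Multiplying by $(1-|w|^2)$ gives the claim.

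For the second statement, I apply the first with $h=e_v$, which lies in $A^2$ since it is bounded on $\mD$ for fixed $v$. This gives
$$\langle e_v,e_w\rangle=\pi(1-|w|^2)e_v(w)=\pi\frac{(1-|w|^2)(1-|v|^2)}{(1-\bar v w)^2}.$$
Taking moduli and using $|1-\bar v w|=|1-\bar w v|$ together with identity \eqref{eqnstar} yields $\pi(1-\rho(v,w)^2)$. Setting $v=w$ gives $\rho=0$, so $\|e_w\|^2=\pi$. Alternatively, this last fact follows from Lemma~\ref{lem:B} with $\varphi\equiv1$, giving $\int_\mD dA=\pi$, which is a useful consistency check.

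The only mildly delicate step is justifying the series manipulation: convergence of the kernel expansion in $A^2$, and the orthogonality of monomials with the exact constants. This is routine, and I expect no real obstacle.
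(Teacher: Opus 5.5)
Your proposal is correct and has the same structure as the paper's proof; the only difference is that the paper cites \cite[Corollary~1.5]{HedKorZhu} for the first formula, while you derive it from $\langle z^n,z^m\rangle_{A^2}=\frac{\pi}{n+1}\delta_{nm}$ and the $A^2$-convergent expansion of $(1-\bar w z)^{-2}$, which makes the lemma self-contained. The second and third claims are obtained exactly as in the paper: take $h=e_v$, apply \eqref{eqnstar} (your remark that $|1-\bar v w|=|1-\bar w v|$ is correct and is used silently in the paper), and then set $v=w$.
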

\begin{proof} 
The first claim follows from \cite[Corollary~1.5]{HedKorZhu}.  Setting $h\!=\!e_v$, 
$$
\textstyle \langle e_v,e_w\rangle = \pi(1-|w|^2)e_v(w) = \pi\frac{(1-|w|^2)(1-|v|^2)}{(1-\bar v w)^2}.
$$
 Also,   $| \langle e_v,e_w\rangle|\!=\!\pi\frac{(1-|w|^2)(1-|v|^2)}{|1-\bar v w|^2} \!=\!\pi(1-\rho(v,w)^2)$ by $(\star)$.
Finally, taking $v\!=\!w$, we get  
$
\textstyle \|e_w\|^2_{A^2} \!=\! \pi(1-\rho(w,w)^2)\!=\!\pi.
$ 
\end{proof}

\noindent We are now ready to give the construction, but first we explain the underlying idea.
We take 
$$
\textstyle g=\sum\limits_{k=1}^\infty e_{w_k},
$$
 where $(w_k)_{k\in \mN}$ converges radially to $1\in\mT$, and is chosen 
  so that $\langle e_{w_j},e_{w_k}\rangle$ are sufficiently small for $j\!\neq\! k$. 
 A Bessel inequality then shows that $g\notin A^2$, since $\|e_{w_k}\|^2\!=\!\pi$ for all $k\in \mN$. 
  
To construct the Blaschke products $r_1$ and $r_2$, we exploit  the dependence of   $\|b_a^me_w\|_{A^2}$  on the pseudohyperbolic distance between $a$ and $w$, and on the multiplicity $m$. 
The zero sets of $r_1$ and $r_2$  are  chosen to be disjoint, with each product having, for every $k$, a zero at a fixed pseudohyperbolic distance from $w_k$. 
The zeroes of $r_1$ and $r_2$  are placed on opposite sides of the real axis to ensure  disjointness, and their multiplicities $m_k\to\infty$ as $k\to\infty$.

The key quantitative fact is that if a Blaschke factor has a zero of order $m$ at a  fixed pseudohyperbolic distance from the point $w$, then multiplication by this factor reduces the $A^2$-norm of $e_w$ by a  factor of order $1/\sqrt{m}$. 
Choosing $m_k\!=\!k^3$ then yields 
$$
\textstyle 
\sum\limits_{k=1}^\infty\|r_ie_{w_k}\|_{A^2}<\infty\text{ for }i\in \{1,2\},
$$
 and so $r_1g,r_2g\in A^2$.

\begin{proposition}[Construction]$\;$
\label{lem:main}

\noindent 
There exist Blaschke products $r_1,r_2\in H^\infty$ with disjoint zero sets$,$ and a $g\in \calO(\mD),$ such that $r_1 g \in A^2,$ $r_2 g \in A^2,$ but $g\notin A^2$.
\end{proposition}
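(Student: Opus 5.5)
We follow the heuristic described just before the statement. Fix a sequence $w_k\in(0,1)$ with $w_k\to1$ rapidly; say $1-w_k=4^{-k^2}$ or some similarly fast-decaying choice, to be fixed later. Fix $\delta\in(0,1)$ (say $\delta=1/2$). Then pick points $a_k^{(1)}$ in the upper half and $a_k^{(2)}$ in the lower half of $\mD$ with $\rho(a_k^{(i)},w_k)=\delta$; for instance $a_k^{(2)}=\overline{a_k^{(1)}}$ with $a_k^{(1)}=b_{w_k}(i\delta)$. Set $m_k=k^3$,
$$
\textstyle r_i:=\prod_{k=1}^\infty(\epsilon_{a_k^{(i)}}b_{a_k^{(i)}})^{m_k},\qquad g:=\sum_{k=1}^\infty e_{w_k}.
$$

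\textbf{Step 1: the objects are well defined.} The Blaschke condition requires $\sum_k k^3(1-|a_k^{(i)}|)<\infty$. By $(\star)$, $1-|a_k^{(i)}|\asymp 1-w_k$ with constants depending only on $\delta$, so fast decay of $1-w_k$ suffices. The zero sets are disjoint because $\operatorname{Im}a_k^{(1)}>0>\operatorname{Im}a_k^{(2)}$; this uses that $b_{w_k}$ with $w_k$ real maps the upper half-disc to the lower half-disc, which one checks directly. For $g$, we have $|e_{w_k}(z)|\le(1-w_k)/(1-|z|)^2$ on compacta, so the series converges locally uniformly and $g\in\calO(\mD)$.

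\textbf{Step 2: $g\notin A^2$.} Suppose $g\in A^2$. By Lemma~\ref{lem:D}, $\langle g,e_{w_j}\rangle=\sum_k\langle e_{w_k},e_{w_j}\rangle$, and the interchange is justified since the partial sums converge in $A^2$ under this hypothesis; alternatively, argue with partial sums directly. Again by Lemma~\ref{lem:D}, $|\langle e_{w_k},e_{w_j}\rangle|=\pi(1-\rho(w_k,w_j)^2)$, and by $(\star)$ this is at most $2\pi\min\{(1-w_k)/(1-w_j),(1-w_j)/(1-w_k)\}$, which is tiny for $j\ne k$ under fast decay. So $\{e_{w_k}/\sqrt\pi\}$ is an almost orthonormal system. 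A simpler way to conclude is to estimate $\|\sum_{k\le N}e_{w_k}\|^2\ge \pi N-\sum_{j\ne k}|\langle e_{w_j},e_{w_k}\rangle|\ge \pi N-C$. Combining this with $\langle g,e_{w_j}\rangle\approx\pi$ and a Bessel or Riesz-sequence inequality $\sum_j|\langle g,u_j\rangle|^2\le C\|g\|^2$ gives a contradiction. Cleanest of all: directly $\sum_j|\langle g,e_{w_j}\rangle|^2=\infty$, while almost-orthogonality makes the Gram matrix a bounded perturbation of $\pi I$, so the map $h\mapsto(\langle h,e_{w_j}\rangle)_j$ is bounded $A^2\to\ell^2$.

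\textbf{Step 3: $r_ig\in A^2$.} This is the main obstacle. Since $|r_i|\le|b_{a_k^{(i)}}|^{m_k}$ and the $A^2$-norm is monotone in the modulus, $\|r_ie_{w_k}\|\le\|b_{a_k^{(i)}}^{m_k}e_{w_k}\|$. By Lemma~\ref{lem:B} this equals $\big(\int_\mD|b_{a}\circ b_{w_k}|^{2m_k}dA\big)^{1/2}$ with $a=a_k^{(i)}$. By Lemma~\ref{lem:C}, $b_a\circ b_{w_k}=\alpha b_\zeta$ with $|\zeta|=\delta$. Lemma~\ref{lem:A} then bounds the integral by $\frac{\pi}{m_k+1}\big(\frac{1+\delta}{1-\delta}\big)^2$. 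Hence $\|r_ie_{w_k}\|\le C_\delta k^{-3/2}$, which is summable. So $\sum_k r_ie_{w_k}$ converges in $A^2$, and its limit equals $r_ig$ pointwise by locally uniform convergence. The only delicate points are making one decay rate for $w_k$ serve both Step 1 and Step 2, and verifying the half-plane separation of the zeros; all the quantitative work is already packaged in Lemmas~\ref{lem:A}--\ref{lem:D}.
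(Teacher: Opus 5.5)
Your proposal is correct and follows the paper's proof essentially step for step: order-$k^3$ zeros on opposite sides of the real axis at fixed pseudohyperbolic distance from real points $w_k\to 1$, $g=\sum_k e_{w_k}$, almost-orthogonality plus a Bessel inequality for $g\notin A^2$, and Lemmas~\ref{lem:B}, \ref{lem:C}, \ref{lem:A} giving $\sum_k\|r_ie_{w_k}\|_{A^2}<\infty$. Two small slips to fix: $b_{w_k}(i\delta)$ lies in the lower half-disc, which is harmless since you only need to swap the labels; and the identity $\langle g,e_{w_j}\rangle=\sum_k\langle e_{w_k},e_{w_j}\rangle$ must come from the reproducing formula $\langle g,e_{w_j}\rangle=\pi(1-w_j^2)g(w_j)$ together with pointwise convergence, as in the paper, not from $A^2$-convergence of the partial sums, which fails because all Gram entries are positive and so $\|\sum_{k\le N}e_{w_k}\|_{A^2}^2\ge\pi N$.
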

\begin{proof} As the proof is long, we split it into a sequence of steps. 

\medskip 

\noindent {\bf Step 1.} (The disjoint sets $Z_1$ and $Z_2$.)

\noindent For each $k\in \mN$, set\footnote{$a_k\!:=\!1-\theta^k$, 
with $\theta\in (0,\frac{1}{9})$, would do: 
See Step~4, where this gives $\eta\!:=\!8\!\cdot\!\frac{\theta}{1-\theta}\!<\!1$.} 
$a_k \!:=\! 1-\frac{1}{10^k}$, and 
$
\textstyle 
\tau_k(\zeta) \!:=\! \frac{\zeta+a_k}{1+a_k\zeta}.
$
Define\footnote{$m_k\!:=\!k^p$, with $p\!>\!2$,  would do: 
See Step 7, where this  gives 
$\sum\limits_{k=1}^\infty\|r_ie_{w_k}\|_{A^2}\!<\!\infty$.}
$$
\textstyle 
w_k \!:=\! \tau_k(0)\! =\! a_k, \quad 
u_k \!:= \!\tau_k(\frac{i}{2}), \quad 
v_k \!:=\! \tau_k(-\frac{i}{2}) \!=\! \overline{u_k}, \quad 
m_k \!:=\! k^3.
$$
 As $\tau_k\!=\!-b_{-a_k}$ is an automorphism of $\mD$, it preserves the hyperbolic distance.  Thus
$$
\textstyle 
\rho(u_k,w_k) \!=\! \rho(\frac{i}{2},0) \!=\! \frac{1}{2} \,\text{ and } \,
\rho(v_k,w_k) \!=\! \rho(-\frac{i}{2},0)\!=\!\frac{1}{2} \,\text{ for all } k\in \mN. 
$$
We have 
 $
\textstyle
u_k=\tau_k(\frac{i}{2})=\frac{i+2a_k}{2+ia_k}
=\frac{(i+2a_k)(2-ia_k)}{(2+ia_k)(2-ia_k)}
= \frac{5a_k+2i(1-a_k^2)}{4+a_k^2},
$ 
 and so 
$$
\textstyle 1-u_k = \frac{(1-a_k)\pmb{(}(4-a_k) - 2i(1+a_k)\pmb{)}}{4+a_k^2}.
$$
Since $a_k\in(0,1)$, the real and imaginary parts of $(4-a_k) - 2i(1+a_k)$   satisfy $(4-a_k)^2\!<\!16$ and $(-2(1+a_k))^2\!<\!16$. So the absolute value of one of the factors in the numerator of the right-hand side above can be estimated above as $|(4-a_k)-2i(1+a_k)| \!< \! \sqrt{16\!+\!16}\!=\!4\sqrt{2}$. Also, the denominator  satisfies $4+a_k^2\!\ge\! 4$. Hence 
\begin{equation}
\textstyle 
1-|u_k| 
\!\le\! |1-u_k| 
\!\le\! \frac{4\sqrt2}{4}(1-a_k) 
\!=\! \sqrt{2}(1-a_k) 
\!\le\! \frac{3}{2}(1-a_k) 
\!= \!\frac{3}{2}\!\cdot\! \frac{1}{10^k}. \tag{$1$}
\end{equation}
As $v_k\!=\!\overline{u_k}$, also $1-|v_k|\!\le\! \frac{3}{2}\!\cdot\! \frac{1}{10^k}$.
Moreover, for all $k\in \mN$, 
$$
\textstyle \text{Im}\;u_k = \frac{2(1-a_k^2)}{4+a_k^2} > 0,\,\text{ and }\,
\text{Im}\;v_k = -\text{Im}\;u_k < 0.
$$
Since $(a_k)_{k\in \mN}$ is strictly increasing, and as the map 
  $$
 \textstyle 
 (0,1)\owns a\mapsto\frac{\frac{i}{2}+a}{1+a\frac{i}{2}}\in \mC
 $$ 
 (which is the restriction of a M\"obius transformation)
 is injective,  the points $u_k$ are pairwise distinct. Thus the $v_k\!=\!\overline{u_k}$ are pairwise distinct too. We therefore obtain
$$
\textstyle 
\begin{array}{ll}
Z_1:=\{u_k:k\in \mN\} \subset \{z\in \mC: \text{Im} \;z>0\}, \\[0.1cm]
Z_2:=\{v_k:k\in \mN\}\subset\{z\in \mC: \text{Im} \;z<0\}, 
 \end{array}
$$
and hence $Z_1\cap Z_2\!=\!\emptyset$.

\medskip

\goodbreak

\noindent {\bf Step 2.}  (Blaschke products $r_1,r_2$  with zero sets $Z_1,Z_2$, respectively.)

\noindent 
As\footnote{The claim follows by induction: It holds for $k\!\in \!\{1,2,3\}$, and if true for some $k\!\ge\! 3$, then 
$(k+1)^3\!=\!k^3(1+\frac{1}{k})^3 \!\le\! k^3 (1+\frac{1}{3})^3 \!=\!k^3 \frac{64}{27}
\!<\!k^3 3\!\le\! 3^k 3\!=\!3^{k+1}$. }
 $k^3\!\le\! 3^k$ for all $k\!\ge\!1$, using ($1$), we get
$$
\textstyle 
m_k(1-|u_k|) \le k^3 \!\cdot\!\frac{3}{2} \!\cdot\! \frac{1}{10^{k}}
=\frac{3}{2} \!\cdot\! \frac{k^3}{3^k}\!\cdot \!(\frac{3}{10})^{k}
 \le \frac{3}{2} \!\cdot \!1 \!\cdot\! (\frac{3}{10})^{k} 
 = \frac{3}{2}(\frac{3}{10})^{k}.
$$
Hence  
$$
\textstyle 
\begin{array}{rcl}
\sum\limits_{k=1}^\infty m_k(1-|u_k|) 
\!\!\!\!&\le&\!\!\!\! \frac{3}{2} \sum\limits_{k=1}^\infty (\frac{3}{10})^{k}
 = \frac{3}{2} \!\cdot\! \frac{\frac{3}{10}}{1-\frac{3}{10}} 
 = \frac{3}{2}\!\cdot\!\frac{3}{7}=\frac{9}{14} <\infty, \text{ and }\\[0.3cm]
\sum\limits_{k=1}^\infty m_k(1-|v_k|)
\!\!\!\!&=&\!\!\!\!\sum\limits_{k=1}^\infty m_k(1-|\overline{u_k}|)
= \sum\limits_{k=1}^\infty m_k(1-|u_k|)\le \frac{9}{14}<\infty.
\end{array}
$$
Thus the Blaschke products 
$$
\textstyle 
r_1 \!:=\!\prod\limits_{k=1}^\infty  (\epsilon_{u_k}b_{u_k})^{m_k} \,\text{ and }\,
r_2 \!:=\! \prod\limits_{k=1}^\infty (\epsilon_{v_k}b_{v_k})^{m_k}
$$
are well-defined. The zero sets of $r_1$ and $r_2$ are $Z_1\!=\!\{u_k:k\in \mN\}$ and $Z_2\!=\!\{v_k:k\in \mN\}$, respectively, and $Z_1\cap Z_2=\emptyset$, as observed in the previous step. 

\medskip

\noindent {\bf Step 3.} (Construction of $g$.)

\noindent We will show 
 $$
\textstyle g := \sum\limits_{k=1}^\infty e_{w_k}
$$  
converges locally uniformly on $\mD$, and hence defines a holomorphic function there.

 For $|z|\!\le\! r\!<\!1$,  
 $
\textstyle |e_{w_k}(z)| \!=\! \frac{1-a_k^2}{|1-a_kz|^2} 
\le  \frac{(1+a_k)(1-a_k)}{(1-|a_k||z|)^2}  \!\le\! \frac{2(1-a_k)}{(1-1\cdot r)^2} \!=\! \frac{2\cdot \frac{1}{10^{k}}}{(1-r)^2}.
$ 
 So
$$
\textstyle 
\sum\limits_{k=1}^\infty \sup\limits_{|z|\le r} |e_{w_k}(z)| 
\le \frac{2}{(1-r)^2} \sum\limits_{k=1}^\infty \frac{1}{10^{k}}
 = \frac{2}{(1-r)^2}\!\cdot\!\frac{1}{9} <\infty.
$$
Hence the series defining $g$ converges uniformly on compact subsets of $\mD$. Hence (see e.g., \cite[Theorem~7.6]{BakNew}) $g$ is holomorphic on $\mD$.

\medskip

\noindent {\bf Step 4.} (An explicit separation bound.)

\noindent Using ($\star$), 
and thanks to $a_k\in (0,1)$, the inequalities $1-a_ja_k \!\ge\! 1-a_j$ and $1-a_k^2 \!=\!(1+a_k)(1-a_k)\!\le\! 2(1-a_k)$, we obtain 
$$
\textstyle
1-\rho(w_j,w_k)^2 \overset{(\star)}{=} \frac{(1-a_j^2)(1-a_k^2)}{(1-a_ja_k)^2} \le \frac{2(1-a_j)\cdot2(1-a_k)}{(1-a_j)^2} = \frac{4(1-a_k)}{1-a_j} = 4\!\cdot \!\frac{1}{10^{k-j}}.
$$
Note that for $j\!<\!k$, $4\!\cdot \!\frac{1}{10^{k-j}}\!\le\! 4\!\cdot\!\frac{1}{10}\!=\!\frac{2}{5}$. 
As $\rho(w_j,w_k)\!=\!\rho(w_k,w_j)$, the bound $4\!\cdot\! \frac{1}{10^{|j-k|}}$ holds for all $j\!\ne\! k$. So for each fixed $j\in \mN$, 
\begin{equation}
\textstyle
\sum\limits_{k\in \mN\setminus \{ j\} }(1-\rho(w_j,w_k)^2) 
\le 2\sum\limits_{m=1}^\infty 4\!\cdot \!\frac{1}{10^{m}}
= 8\!\cdot\!\frac{\frac{1}{10}}{1-\frac{1}{10}} 
= \frac{8}{9} =: \eta\in (0,1). \tag{$2$}
\end{equation}

\medskip

\noindent {\bf Step 5.} (Bessel's inequality.)

\noindent Since all $w_j$ are real, the proof of Lemma~\ref{lem:D} shows that 
$$
\textstyle
\langle e_{w_j},e_{w_k}\rangle
=\pi(1-\rho(w_j,w_k)^2).
$$
For any finite $J\subset\mN$ and scalars $(c_j)_{j\in J}$,  we get 
$$
\textstyle 
\Big\|\sum\limits_{j\in J} c_j e_{w_j}\Big\|_{A^2}^2 
=
  \pi\sum\limits_{j\in J}|c_j|^2 + \pi\!\sum\limits_{\substack{(j,k)\in J\times J\\j\ne k}} \!
  c_j \bar c_k
(1-\rho(w_j,w_k)^2).
$$
Using $|c_j||c_k|\!\le\!\frac{1}{2}(|c_j|^2+|c_k|^2)$, the symmetry $\rho(w_j,w_k)\!=\!\rho(w_k,w_j)$, and  the estimate from ($2$) above, we can bound the second summand on the right-hand side as follows:
$$
\textstyle 
\!\!\!\!\begin{array}{rcl}
&& \Big|\sum\limits_{\substack{(j,k)\in J\times J\\j\ne k}} c_j \bar c_k
(1-\rho(w_j,w_k)^2)\Big|
\\[0.81cm]
\!\!\!&\le&\!\!\! 
\frac{1}{2} \sum\limits_{\substack{(j,k)\in J\times J\\j\ne k}} (|c_j|^2+|c_k|^2) 
(1-\rho(w_j,w_k)^2)
\\[0.81cm]
\!\!\!&\le&\!\!\! 
\frac{1}{2}\sum\limits_{j \in J}|c_j|^2\! \sum\limits_{k\in J\setminus \{j\}} (1-\rho(w_j,w_k)^2)
+
\frac{1}{2}\sum\limits_{k \in J}|c_k|^2 \!\sum\limits_{j\in J\setminus \{k\}} (1-\rho(w_j,w_k)^2)
\\[0.81cm]
\!\!\!&\le&\!\!\! \eta \sum\limits_{j\in J}|c_j|^2.
\end{array}
$$
Thus 
$$
\textstyle 
\Big\|\sum\limits_{j\in J} c_j e_{w_j}\Big\|_{A^2}^2 
\le 
  \pi\sum\limits_{j\in J}|c_j|^2 + \pi\eta \sum\limits_{j\in J}|c_j|^2
  = \pi(1+\eta)\sum\limits_{j\in J}|c_j|^2.
$$
For $h\in A^2$, set $c_j:=\overline{\langle h,e_{w_j}\rangle}$. By Cauchy-Schwarz and the above,
$$
\textstyle 
S_J:=
\sum\limits_{j\in J}|\langle h,e_{w_j}\rangle|^2 
= \Big\langle h,\sum\limits_{j\in J}\bar c_j e_{w_j}\Big\rangle \le \|h\|_{A^2}\sqrt{\pi(1+\eta)}\sqrt{S_J}.
$$
Hence $S_J \le \pi(1+\eta)\|h\|_{A^2}^2$. As $J\subset \mN$ was arbitrary, we obtain 
\begin{equation}
\textstyle 
\sum\limits_{k=1}^\infty |\langle h,e_{w_k}\rangle|^2 \le \pi(1+\eta)\|h\|_{A^2}^2\quad \text{for all } h\in A^2. \tag{$3$}
\end{equation}

\medskip

\noindent {\bf Step 6.} ($g\notin A^2$.)

\noindent Suppose $g\in A^2$. By Lemma~\ref{lem:D}, $\langle g,e_{w_j}\rangle \!=\! \pi(1-a_j^2)g(w_j)$. By the locally uniform convergence established in Step~3, 
$$
\textstyle 
g(w_j)=\sum\limits_{k=1}^\infty e_{w_k}(w_j).
$$
 Now $(1-a_j^2)e_{w_j}(w_j)\!=\!1$, and for $k\!\ne\! j$, using ($\star$), 
 we get 
 $$
 \textstyle (1-a_j^2)e_{w_k}(w_j)
 =\frac{(1-a_j^2)(1-a_k^2)}{(1-a_ja_k)^2}
 \overset{(\star)}{=}1-\rho(w_j,w_k)^2>0.
 $$
  So using (2), we obtain
$$
\textstyle 
|(1-a_j^2)g(w_j) - 1| = \Big|\sum\limits_{k\in \mN\setminus\{ j\}}(1-a_j^2)e_{w_k}(w_j)\Big| 
\le \sum\limits_{k\in \mN\setminus\{ j\}}(1-\rho(w_j,w_k)^2) \le \eta.
$$
Hence $(1-a_j^2)|g(w_j)| \ge 1-\eta$ for each $j\in \mN$. Thus
$$
\textstyle 
|\langle g,e_{w_j}\rangle| = \pi(1-a_j^2)|g(w_j)| 
\ge \pi(1-\eta) = \pi\!\cdot\!\frac{1}{9} > 0  \text{ for all } j\in \mN.
$$
Consequently, $\sum\limits_{j=1}^\infty |\langle g,e_{w_j}\rangle|^2\!=\!\infty$, contradicting ($3$). Hence $g\notin A^2$.

\medskip

\noindent {\bf Step 7.} ($r_1 g\!\in \! A^2$ and $r_2 g\!\in\! A^2$).

\noindent 
Using Lemma~\ref{lem:B} (with $\varphi\!=\!r_1$ and $w\!=\!w_k$), 
 and noting that every factor
$(\epsilon_{u_j}b_{u_j})^{m_j}$ in the Blaschke product $r_1$ has modulus at most $1$ on
$\mD$, we may discard all factors except the $k^{\text{th}}$ one to obtain an upper bound. Hence
$$
\textstyle 
\begin{array}{rcl}
\int_\mD |r_1(z)|^2|e_{w_k}(z)|^2\,dA(z) 
\!\!\!&=&\!\!\! \int_\mD |r_1(b_{w_k}(\eta))|^2\,dA(\eta) 
\\[0.21cm]
\!\!\!&\le&\!\!\! \int_\mD |b_{u_k}(b_{w_k}(\eta))|^{2m_k}\,dA(\eta).
\end{array}
$$
By Lemma~\ref{lem:C},  we get $|b_{u_k}(b_{w_k}(\eta))| \!=\! |b_{\zeta}(\eta)|$ for some $\zeta\in \mD$ satisfying $|\zeta|\!=\!\rho(u_k,w_k)\!=\!\frac{1}{2}$, where the last equality follows from Step 1. Using Lemma~\ref{lem:A}, we thus obtain 
$$
\textstyle 
\int_\mD |r_1(z)|^2|e_{w_k}(z)|^2\,dA(z) 
\le \frac{\pi}{m_k+1}(\frac{1+\frac{1}{2}}{1-\frac{1}{2}})^2 = \frac{9\pi}{m_k+1}.
$$
As $r_1 e_{w_k}\in \calO(\mD)$ with finite $L^2(\mD,dA)$-norm, we have  $r_1e_{w_k}\in A^2$. Also, since $m_k\!=\!k^3$, we get 
$$
\textstyle \|r_1e_{w_k}\|_{A^2}\le\sqrt{\frac{9\pi}{m_k+1}}=  \frac{3\sqrt\pi}{\sqrt{k^3+1}} \le \frac{3\sqrt\pi}{k^{\frac{3}{2}}}.
$$ 
Thus
$$
\textstyle 
\sum\limits_{k=1}^\infty \|r_1e_{w_k}\|_{A^2} 
\le 3\sqrt\pi\sum\limits_{k=1}^\infty\frac1{k^{\frac{3}{2}}} 
<\infty.
 $$
 Therefore the series 
 $$
 \textstyle \sum\limits_{k=1}^\infty r_1e_{w_k}
 $$ 
 converges absolutely in the Banach space $A^2$, and hence converges in $A^2$ to some element $h\in A^2$.

 Since point evaluations  are continuous linear functionals on $A^2$, the above  
$A^2$-norm convergence implies pointwise convergence on $\mD$. So for each $z\in\mD$, we conclude that 
$$
\textstyle 
h(z) = \sum\limits_{k=1}^\infty r_1(z)e_{w_k}(z) 
= r_1(z)\sum\limits_{k=1}^\infty e_{w_k}(z) 
= r_1(z)g(z),
$$
using the definition of $g$ from Step 3. Hence $r_1 g \!= \! h \in A^2$. Similarly, $r_2 g\in A^2$.
\end{proof}

\subsection{Proof of the non-flatness of the $H^\infty$-module $A^2$}

\begin{theorem}
\label{thm_1}
There exist $r_1,r_2\in H^\infty$ and $f_1,f_2\in A^2$ with 
$$
\textstyle 
r_1f_1+r_2f_2=0,
$$ 
such that there do not exist $k\in \mN,$ $\theta_{ij}\in H^\infty$ $(1\!\le\! i\!\le\! 2,\,1\!\le\! j\!\le\! k),$ 
and $\mu_1,\cdots, \mu_k \in A^2$ satisfying 
\begin{equation}
\textstyle 
\left. \begin{array}{ll}
\sum\limits_{i=1}^2 r_i\theta_{ij}=0\text{ \em for all }j\in \{1,\cdots, k\},
\text{ \em and }\\
f_i = \sum\limits_{j=1}^k \theta_{ij}\mu_j \text{ \em for all }i\in \{1,2\}.
\end{array}\right\} \tag{$\ast$}
\end{equation}
\end{theorem}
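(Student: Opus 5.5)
The proof assembles the pieces already established. By Proposition~\ref{lem:main}, choose Blaschke products $r_1,r_2\in H^\infty$ with disjoint zero sets and a function $g\in\calO(\mD)$ with $r_1g\in A^2$, $r_2g\in A^2$ and $g\notin A^2$. Set
$$
\textstyle f_1:=r_2g\in A^2 \quad\text{and}\quad f_2:=-r_1g\in A^2.
$$
Then $r_1f_1+r_2f_2=r_1r_2g-r_2r_1g=0$, so the data $r_1,r_2,f_1,f_2$ satisfy the required relation. The goal is to show that no $k$, $\theta_{ij}$, $\mu_j$ can satisfy $(\ast)$.

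Suppose, for contradiction, that $k\in\mN$, $\theta_{ij}\in H^\infty$ and $\mu_1,\cdots,\mu_k\in A^2$ satisfy $(\ast)$. The first line of $(\ast)$ is exactly the hypothesis of Corollary~\ref{cor:collapse}, since $r_1,r_2$ are Blaschke products with disjoint zero sets. The corollary therefore gives $\widetilde g\in A^2$ with
$$
\textstyle f_1=\sum_{j=1}^k\theta_{1j}\mu_j=-\widetilde g\,r_2 \quad\text{and}\quad f_2=\sum_{j=1}^k\theta_{2j}\mu_j=\widetilde g\,r_1.
$$
Hence $r_2g=-r_2\widetilde g$ on $\mD$. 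Because $r_2$ is not identically zero, the identity theorem gives $g=-\widetilde g$; equivalently, this is the injectivity part of Proposition~\ref{prop:bijection}, since both $g$ and $-\widetilde g$ lie in $G$ and have the same image. It follows that $g=-\widetilde g\in A^2$, which contradicts $g\notin A^2$. So no such data exist, and the theorem is proved.

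There is no real obstacle left at this stage. All the analytic difficulty sits in Proposition~\ref{lem:main}, and the algebraic difficulty is the rank-one description of the relation module in Proposition~\ref{prop:cyclic}, which relies on the $H^\infty$-division of Lemma~\ref{lem:boundedquotient}. The only point that needs care is checking that the hypotheses match: $r_1,r_2$ must be Blaschke products with disjoint zero sets, so that Corollary~\ref{cor:collapse} applies. Combined with the matricial characterisation of flatness recalled in the introduction, the theorem then shows that the $H^\infty$-module $A^2$ is not flat.
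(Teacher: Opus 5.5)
Your proposal is correct and follows the paper's proof essentially verbatim. You take $r_1,r_2,g$ from Proposition~\ref{lem:main}, set $f_1=r_2g$ and $f_2=-r_1g$, and use Corollary~\ref{cor:collapse} to produce $\widetilde g\in A^2$; the identity theorem then gives $g=-\widetilde g\in A^2$, which is the contradiction.
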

\begin{proof} From Proposition~\ref{lem:main}, there exist Blaschke products
 $r_1,r_2$  with disjoint zero sets, and a $g\in \calO(\mD)$  such that  $r_1g,r_2g\in A^2$ but $g\notin A^2$. Set $f_1\!:=\!r_2 g\in A^2$ and $f_2\!:=\!-r_1g\in A^2$.
Then 
$$
\textstyle 
r_1f_1+r_2f_2 = r_1(r_2g)+r_2(-r_1g)=0. 
$$
Suppose, on the contrary,  that there exist $k\in \mN,$ functions $\theta_{ij}\in H^\infty$ ($1\!\le\! i\!\le\!2$, $1\!\le\! j\!\le\! k$), and $\mu_1,\cdots, \mu_k \in A^2$ satisfying ($\ast$). 
By Corollary~\ref{cor:collapse}, there exists a $\widetilde{g}\in A^2$ with $
f_1 \!=\! -r_2\widetilde{g}$ and $ f_2\!=\!r_1\widetilde{g}$.
Comparing $f_1\!=\!r_2g$ with $f_1\!=\!-r_2\widetilde{g}$, we obtain
$r_2(g+\widetilde{g})\!=\!0$ 
on $\mD$. Since $r_2$ is not identically zero, the identity theorem gives
$g\!=\!-\widetilde{g}$ on $\mD$. Thus $g\!=\!-\widetilde{g}\in A^2$, contradicting Proposition~\ref{lem:main}. Hence no such representation exists.
\end{proof}

\begin{remark}[$A^2$ versus $H^2$]
We contrast the above with the fact that 
the $H^\infty$-module $H^2$ is flat (see~\cite[Proposition~3.1]{Sas}).
The algebraic reduction of \S\ref{subsection_algebraic_reduction}  applies also to $H^2$ (instead of $A^2$).
Thus the non-flatness of $A^2$ arises from the results of  \S\ref{subsection_construction}.
In particular, thanks to the fact that the $A^2$-norm is
an area integral, Lemma~\ref{lem:A} produces a 
decay of order $1/\sqrt{m}$ when a Blaschke factor is raised to the power $m$. Such a decay
is absent in the $H^2$ case, since for an inner $r$,  $|r(\zeta)|\!=\!1$ for almost every
$\zeta \in \mathbb{T}$, showing that multiplication by $r$ is an
isometry of $H^2$ (i.e., $  \|rf\|_{H^2} \!=\! \|f\|_{H^2}
  $ for all $ f \in H^2$). Consequently, the
mechanism underlying Proposition~\ref{lem:main} 
(accumulating reproducing kernels near the boundary while suppressing
their norm using high-order Blaschke factors) is 
unavailable for $H^2$. The aforementioned `isometric rigidity' is captured
by the vector-valued Beurling theorem for $H^2$ (every closed $H^\infty$-invariant subspace $S$ of $(H^2)^n$ is 
 the isometric image of $(H^2)^k$ under multiplication by a matricial inner function $\Theta\in (H^\infty)^{n\times k}$ 
 for some $k \!\le\! n$), and underlies the proof of the flatness of $H^2$.
On the other hand,  an inner function acts only as
a contraction on the Bergman space, and so no such isometric classification
of invariant subspaces is available. The resulting extra `room'
is exploited by Proposition~\ref{lem:main}.
\end{remark}

\section{Maximal ideals $\mathfrak{m}$ of $H^\infty$ such that $\mathfrak{m}A^2=A^2$}
\label{section_not_faithful}

\subsection{Maximal ideal space of $H^\infty$}$\;$

\noindent 
Background on the Hardy algebra $H^\infty$ and its maximal ideal space  can be found, for example, in \cite{Gar} and \cite{Hof}. 

 Let $A$ be a commutative unital complex semisimple Banach algebra. 
 The dual space $A^*$ of $A$ consists of all continuous linear complex-valued maps defined on $A$. The {\em maximal ideal
space} $M(A)$ of $A$  is the set of all nonzero homomorphisms\footnote{That is, nonzero  multiplicative continuous linear functionals.} $\varphi\!:\! A \!\to\!  {\mathbb{C}}$. As $M(A)$ is a subset of $A^*$, it inherits the  weak-$\ast$ topology of $A^*$, called the {\em Gelfand topology} on $M(A)$. 
 There is a bijective correspondence between $M(A)$ and the set of all maximal ideals of $A$: Each  $\varphi \in M(A)$ corresponds to the maximal ideal $\mathfrak{m}\!:=\!\ker \varphi $ of $A$.  Equipped with the Gelfand topology, $M(A)$ is a compact Hausdorff space. The set $M(A)$ is contained in the unit sphere of  the Banach space ${\mathcal{L}}(A,{\mathbb{C}})$ of all complex-valued continuous linear maps on $A$  with the operator norm (given by $\|\varphi\|\!=\!\sup_{a\in A, \;\|a\|\le 1} |\varphi(a)|$ for all $\varphi \in {\mathcal{L}}(A,{\mathbb{C}})$).  
 Let $C(M(A))$ denote the Banach algebra  of 
complex-valued continuous functions on $M(A)$ with pointwise operations and the supremum norm. 
 The {\em Gelfand transform} $\widehat{a}\in C(M(A))$ of an element $a\in A$ is  
 defined by 
 $$
 \widehat{a}(\varphi)\!:=\!\varphi(a)\text{ for all }\varphi \in M(A).  
$$
 Consider the identity function ${\bm{z}} \in H^\infty$, given by 
 $$
 \textstyle {\mathbb{D}}\owns \zeta \mapsto {\bm{z}}(\zeta):=\zeta.
 $$
  Then the map $\pi: M( H^\infty)\to {\mathbb{C}}$, 
 $$
 \textstyle 
\pi(\varphi)=\varphi({\bm{z}})\text{ for all }\varphi \in  M( H^\infty),
$$ 
 is a continuous map onto the closed  unit disk $\overline{{\mathbb{D}}}$  in ${\mathbb{C}}$. 
 For $\lambda\in \mD$, let $\varphi_\lambda\in M(H^\infty)$ be the point evaluation map at $\lambda$, given by 
 $$
 \textstyle \varphi_\lambda(g)=g(\lambda)\text{ for all }g\in H^\infty.
 $$ 
 Then $\pi(\varphi_\lambda)\!=\!\lambda$ for all $\lambda \in \mD$.  
 Over  $\pi^{-1}{\mathbb{D}}$, $\pi$ is one-to-one, and maps $\pi^{-1}{\mathbb{D}}$ homeomorphically onto $\mD$. 
   The remaining part, $M( H^\infty)\setminus (\pi^{-1}\mD)$, is mapped by $\pi$ onto the unit circle. If $|\alpha|\!=\!1$, then $\pi^{-1}\{\alpha\}$ is the {\em fibre of $M( H^\infty)$ over $\alpha$}. We will use the following result due to I.J. Schark\footnote{An acronym for real authors’  names:
{\bf{I}}rving Kaplansky, 
{\bf{J}}ohn Wermer, 

{\bf{S}}hiuzo Kakutani, 
{\bf{C}}reighton Buck, 
{\bf{H}}alsey Royden, 
{\bf{A}}ndrew Gleason, 

{\bf{R}}ichard~Arens, and 
{\bf{K}}enneth Hoffman.} (see \cite{Sch}).

   \begin{proposition}
   \label{prop_Schark}
   If $g\in  H^\infty$ and $\alpha \in {\mathbb{T}},$ then 
   $$
   \textstyle 
   \!\!\begin{array}{rcl}
   \text{\em ran}(\widehat{g}|_{\pi^{-1}\{\alpha\}})
   \!\!\!\!\!&:=&\!\!\!\!
   \{\varphi(g)\;|\; \varphi \in \pi^{-1}\{\alpha\}\}
   \\[0.21cm]
   \!\!\!\!\!&=&\!\!\!\!\!
   \Big\{\zeta \in {\mathbb{C}}\;\Big|\!
   {\scaleobj{0.93}{\begin{array}{ll} \text{\em there exists a sequence }(\lambda_n)_{n\in {\mathbb{N}}}\text{ \em in }{\mathbb{D}}
   \text{ \em satisfying } \\
\lim\limits_{n\to \infty}\lambda_n\!=\!\alpha \text{ \em and } 
\lim\limits_{n\to \infty} g(\lambda_n)\!=\!\zeta
\end{array}}}\!\!\!\Big\}.
\end{array}
$$
\end{proposition}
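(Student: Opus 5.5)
We will prove both inclusions separately. Fix $g\in H^\infty$ and $\alpha\in\mT$, and write $C_\alpha(g)$ for the cluster set on the right-hand side.

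\emph{The inclusion $C_\alpha(g)\subset\text{ran}(\widehat g|_{\pi^{-1}\{\alpha\}})$.} This direction follows from compactness of $M(H^\infty)$. Let $\lambda_n\to\alpha$ with $g(\lambda_n)\to\zeta$. Since $M(H^\infty)$ is compact in the Gelfand topology, the net $(\varphi_{\lambda_n})$ has a cluster point $\varphi$. The functions $\widehat{\bm z}$ and $\widehat g$ are continuous, so $\varphi(\bm z)$ is a cluster point of $(\lambda_n)$, hence $\varphi(\bm z)=\alpha$. Likewise $\varphi(g)$ is a cluster point of $(g(\lambda_n))$, hence $\varphi(g)=\zeta$. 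Therefore $\varphi\in\pi^{-1}\{\alpha\}$ and $\varphi(g)=\zeta$.

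\emph{The inclusion $\text{ran}(\widehat g|_{\pi^{-1}\{\alpha\}})\subset C_\alpha(g)$.} This is the substantive direction. Let $\varphi\in\pi^{-1}\{\alpha\}$ and $\zeta=\varphi(g)$, and suppose $\zeta\notin C_\alpha(g)$. Since the cluster set is determined by behaviour near $\alpha$, there exist $\delta>0$ and $r>0$ such that
\[
|g(\lambda)-\zeta|\ge\delta \quad\text{for all } \lambda\in\mD \text{ with } |\lambda-\alpha|<r .
\]
The aim is to construct an element of $H^\infty$ that is invertible yet lies in $\ker\varphi$, which is impossible because $\ker\varphi$ is a proper ideal.

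Set $f:=g-\zeta$, so that $\varphi(f)=0$. Also $\varphi(\bm z-\alpha)=0$, and $|\bm z-\alpha|\ge r$ wherever $|f|<\delta$. Hence $|f|+|\bm z-\alpha|\ge\min(\delta,r)>0$ on all of $\mD$.

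The plan is to invoke the Corona theorem (Carleson). It yields $h_1,h_2\in H^\infty$ with $h_1f+h_2(\bm z-\alpha)=1$. Applying $\varphi$ to this identity gives $0=1$, a contradiction. Hence $\zeta\in C_\alpha(g)$.

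\emph{Main obstacle.} The obstacle is precisely this use of Carleson's Corona theorem, which is deep. If a more elementary route is desired, I would try the original Schark argument instead. Write $\bm z-\alpha$ times a suitable outer factor, use the fact that $f$ is bounded below near $\alpha$ to show $1/f$ restricted near $\alpha$ can be patched via Cauchy integral splitting into a function in $H^\infty$, and combine this with the fact that $\varphi$ kills functions vanishing suitably at $\alpha$. Since the statement is cited from \cite{Sch} and the two-generator Corona theorem is standard, I would simply cite Carleson's theorem.
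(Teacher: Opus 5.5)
Your argument is correct, but it is not the paper's route: the paper gives no proof and simply cites Schark \cite{Sch}. Your compactness argument for $C_\alpha(g)\subset\text{ran}(\widehat g|_{\pi^{-1}\{\alpha\}})$ is the standard one. The reverse inclusion is correctly reduced to the Bezout identity for the pair $(g-\zeta,\bm z-\alpha)$, whose moduli sum to at least $\min(\delta,r)$ on $\mD$. There is no circularity, since Carleson's proof of the Bezout equation does not use cluster sets.

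The trade-off is that you derive a 1961 result from the much deeper 1962 corona theorem, whereas the special second generator $\bm z-\alpha$ makes this particular Bezout identity elementary.

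\emph{The classical route} factors $g-\zeta=BSF$. The bound $|g-\zeta|\ge\delta$ near $\alpha$ keeps the zeros of $B$ and the singular measure of $S$ away from an open arc $I\ni\alpha$. One then writes $F=F_1F_2$, where $F_1$ is outer with modulus $|F|$ on $I$ and $1$ off $I$ (so $F_1^{\pm1}\in H^\infty$), and $F_2$ is analytic across $I$. Then $u:=BSF_2$ is analytic near $\alpha$ with $|u(\alpha)|=1$, hence $v:=(u-u(\alpha))/(\bm z-\alpha)\in H^\infty$. Applying $\varphi$ to $F_1^{-1}(g-\zeta)=u(\alpha)+(\bm z-\alpha)v$ gives $\varphi(g-\zeta)=\varphi(F_1)u(\alpha)\neq0$.

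\emph{Alternatively}, the patching idea in your last paragraph can be made rigorous with $\bar\partial$. Take a smooth cutoff $\chi$ equal to $1$ near $\alpha$ and supported in $\{|z-\alpha|<r\}$, and set $h_1=\chi/(g-\zeta)+(\bm z-\alpha)b$ and $h_2=(1-\chi)/(\bm z-\alpha)-(g-\zeta)b$. These satisfy $h_1(g-\zeta)+h_2(\bm z-\alpha)=1$, and they are holomorphic once $\bar\partial b=-\bar\partial\chi/\bigl((g-\zeta)(\bm z-\alpha)\bigr)$. That right-hand side is bounded with compact support, so its Cauchy transform gives a bounded $b$.

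Either way the proof becomes self-contained, at the cost of a few more lines than citing Carleson; your version buys brevity by relying on a deep black box.
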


\noindent In particular, taking $\alpha\!=\!\{1\}$, if $g\in H^\infty$ satisfies 
 $$
 \textstyle 
\lim\limits_{\mD\owns z\to 1} g(z)\!=\!0,
$$ 
then 
$$
\textstyle  \Big\{\zeta \in {\mathbb{C}}\;\Big|\!
   {\scaleobj{0.93}{\begin{array}{ll} \text{ there exists a sequence }(\lambda_n)_{n\in {\mathbb{N}}}\text{ in }{\mathbb{D}}
   \text{ satisfying } \\
\lim\limits_{n\to \infty}\lambda_n\!=\!1 \text{ and } 
\lim\limits_{n\to \infty} g(\lambda_n)\!=\!\zeta.
\end{array}}}\!\!\!\Big\}=\{0\},
$$
and so Proposition~\ref{prop_Schark} implies $g\in \ker \varphi$ for {\em all} $\varphi \in \pi^{-1}\{1\}$. 

\goodbreak 

\subsection{The result and its proof}

\begin{theorem}
\label{thm_2}
Let $\alpha\in \mathbb{T},$ $\varphi\in \pi^{-1}\{\alpha\},$ and $\mathfrak{m}\!=\!\ker \varphi$. 
Then $\mathfrak{m}A^2\!=\!A^2$. 
\end{theorem}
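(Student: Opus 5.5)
We need to show that every $f\in A^2$ can be written as $f=\sum_i r_i h_i$ with $r_i\in\mathfrak{m}$ and $h_i\in A^2$. The plan is to take a single factorisation $f=G\cdot h$. Here $G\in H^\infty$ vanishes at $\alpha$ in the sense of the remark following Proposition~\ref{prop_Schark}, that is, $\lim_{\mD\ni z\to\alpha}G(z)=0$. The remark, rotated from $1$ to $\alpha$, then gives $G\in\ker\varphi=\mathfrak{m}$ for every $\varphi\in\pi^{-1}\{\alpha\}$. We also need $h:=f/G\in A^2$. Since $G$ is allowed to depend on $f$, the task reduces to a statement purely about $A^2$: every $f\in A^2$ can be divided by a bounded holomorphic function that tends to $0$ at $\alpha$ while the quotient stays in $A^2$.

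The candidate is $G(z)=(\alpha-z)^{s}$ for a small exponent, or more flexibly $G=\omega(\alpha-z)$, where $\omega$ is an outer-type function that decays very slowly and is adapted to $f$. The first obstacle is that $f/(\alpha-z)^{s}$ need not lie in $A^2$ for a fixed $s>0$, since $f$ may already be as large as allowed near $\alpha$. I would therefore adapt $G$ to $f$. Let $\mu:=|f|^2\,dA$, a finite measure on $\mD$ with $\mu(\{z:|z-\alpha|<\delta\})\to0$ as $\delta\to0$. Choose radii $\delta_n\downarrow 0$ with $\mu(\{|z-\alpha|<\delta_n\})\le 4^{-n}$. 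Then build a positive decreasing function $\psi$ on $(0,2]$ with $\psi(t)\to0$ as $t\to0$, $\psi\approx 2^{-n/2}$ roughly on the range $[\delta_{n+1},\delta_n)$, and $\psi$ bounded below by a positive constant away from $0$. With such a $\psi$, the integral $\int|f|^2/\psi(|z-\alpha|)^2\,dA$ is dominated by $\sum_n 2^{n}\,4^{-n}<\infty$.

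The second, and main, step is to realise $\psi(|\alpha-z|)$ up to constants by $|G(z)|$ for a holomorphic bounded $G$. I would take $G(z)=\exp\big(-\Phi(\alpha-z)\big)$, where $\Phi(w)=\sum_n c_n\log\!\big(1+\tfrac{\delta_n}{w}\big)$, or a similar positive-harmonic-real-part construction on the right half-plane $\{\operatorname{Re} w>0\}\supset \alpha-\mD$ (after rotating so that $\alpha=1$). An alternative that may be cleaner is to pick a concave, slowly increasing $L$ with $L(t)\to\infty$ as $t\to\infty$ and set $G(z)=1/L(1/(1-\bar\alpha z))$. For instance, iterated logarithms $1/\log(e+\log(\ldots))$ are holomorphic on the half-plane, have modulus comparable to their real-argument value inside $\mD$ because the argument of $1-\bar\alpha z$ is bounded by $\pi/2$, and tend to $0$ at $\alpha$. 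The difficulty is that one fixed slow function will not handle every $f$, so the choice must be driven by the sequence $\delta_n$. This comparison $|G(z)|\asymp\psi(|\alpha-z|)$ on $\mD$, with $G$ holomorphic and bounded, is the step I expect to be hardest. I would first try a sum $\Phi(w)=\sum_n \varepsilon_n\log(1+\delta_n/w)$ with $\varepsilon_n$ small and summable. Each term has real part $\ge0$ on the half-plane, the real part of the sum is comparable to $\sum_{n:\,\delta_n\gtrsim|w|}\varepsilon_n$, and this tends to $\infty$ as $w\to0$ provided $\sum\varepsilon_n=\infty$. Then $|G|=e^{-\operatorname{Re}\Phi}$ is bounded by $1$ and tends to $0$, and its decay rate can be tuned to be slower than the decay of $\mu$ near $\alpha$.

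Finally, I would check that $h=f/G$ is holomorphic, since $G$ never vanishes in $\mD$, and that $\|h\|_{A^2}^2=\int|f|^2|G|^{-2}\,dA<\infty$ by the dyadic estimate above. This gives $f=G h\in\mathfrak{m}A^2$. Since $f$ was arbitrary and $\mathfrak{m}A^2\subset A^2$ trivially, $\mathfrak{m}A^2=A^2$.
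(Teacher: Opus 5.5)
Your reduction is the right one and matches the paper's. You factor $f=G\cdot(f/G)$ with $G\in H^\infty$ tending to $0$ at $\alpha$, so that $G\in\mathfrak m$ by the remark after Proposition~\ref{prop_Schark}. You then need $\int_{\mD}|f|^2|G|^{-2}\,dA<\infty$. The dyadic choice of $\delta_n$ and the target profile $\psi$ are also fine.

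The gap is in the step you flag as hardest, and your candidate for it fails. The claim that $\operatorname{Re}\Phi(w)$ is comparable to the sum of the $\varepsilon_n$ with $\delta_n$ of size at least $|w|$ is false. For $|w|\le\delta_n/2$ you have $|1+\delta_n/w|\ge\delta_n/(2|w|)$. So the $n$-th term alone contributes at least $\varepsilon_n\log(\delta_n/(2|w|))$, which is unbounded as $w\to0$. Since every term has nonnegative real part, every $G=e^{-\Phi(1-\bar\alpha z)}$ with some $\varepsilon_n>0$ satisfies $|G(z)|\le(2|\alpha-z|/\delta_n)^{\varepsilon_n}$ for $|z-\alpha|\le\delta_n/2$. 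That is power decay, exactly the obstruction you identified for $(\alpha-z)^s$.

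The condition $\sum\varepsilon_n=\infty$ is beside the point, and it contradicts ``summable''. A single term already forces $G\to0$; the real danger is that $G$ decays too fast. Concretely, $f(z)=\big((1-\bar\alpha z)\log\frac{e}{1-\bar\alpha z}\big)^{-1}$ lies in $A^2$, but $\int_{\mD}|f|^2|\alpha-z|^{-2s}\,dA=\infty$ for every $s>0$. Hence $f/G\notin A^2$ for every $G$ in your family. The iterated-logarithm alternative is one fixed function, and as you note it cannot serve every $f$; no $f$-adapted version is given. So the only nontrivial analytic ingredient of the proof is missing.

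What is missing is building blocks whose real parts are bounded bumps rather than logarithms, namely Poisson integrals of arcs. Rotate to $\alpha=1$. Choose $\delta_{n+1}\le\delta_n/2$ with $\mu(\{|z-1|<\delta_n\})\le4^{-n}$. Let $G$ be the outer function with $\log|G|=-\frac14\sum_n\chi_{I_n}$ on $\mT$, where $I_n=\{e^{it}:|t|<\delta_n\}$. Then $-\log|G(z)|=\frac14\sum_n u_n(z)$, where $u_n(z)=\frac1{2\pi}\int_{-\delta_n}^{\delta_n}P_z(t)\,dt$. These satisfy:
\begin{itemize}
\item $u_n(z)\in[0,1]$;
\item $u_n(z)\to1$ as $z\to1$;
\item $u_n(z)\le\frac{8\delta_n}{\pi|1-z|}$ whenever $|1-z|\ge2\delta_n$.
\end{itemize}
Hence $G\to0$ at $1$, and $|G|^{-2}\le e^{(N+1)/2+4/\pi}$ on $\{\delta_{N+1}\le|z-1|<\delta_N\}$. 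So $\int_{\mD}|f|^2|G|^{-2}\,dA$ is bounded by $\sum_N e^{N/2}4^{-N}<\infty$ plus a bounded term away from $1$. This is the comparison you were after.

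The paper avoids pointwise comparison altogether. It transfers $|f|^2\,dA$ to the circle via $\phi(t)=\frac1{2\pi}\int_{\mD}|f(z)|^2P_z(t)\,dA(z)$. It then takes $g$ outer with $|g|=\omega^{1/4}$ on $\mT$, where $\omega(t)=\max\{|\int_0^t\phi(\tau)\,d\tau|,|t|/\pi\}$. Jensen's inequality, $|g(z)|^{-2}\le\frac1{2\pi}\int P_z(t)\,\omega(t)^{-1/2}\,dt$, together with Fubini gives $\int_{\mD}|f|^2|g|^{-2}\,dA\le\int\phi\,\omega^{-1/2}\,dt\le4$.
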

\begin{proof} There is rotational symmetry, and so we will just consider $\alpha\!=\!1$. 
We need to show that $A^2\subset \mathfrak{m}A^2$. 
Let $f \in A^2$ be such that $\|f\|_{A^2}\!=\!1$. We will construct an outer $g\in H^\infty$ such that $g\in \mathfrak{m}$ and $\frac{f}{g}\in A^2$. 

The Poisson kernel is given by 
$$
\textstyle P_z(t) = \frac{1-|z|^2}{|e^{it}-z|^2}\;\;\text{ for }z\in \mD\;\text{ and }\;t\in (-\pi,\pi].
$$
Define $\phi: (-\pi,\pi]\to [0,+\infty]$ by 
$$
\textstyle
\phi(t) = \int_{\mD} |f(z)|^2 \frac{P_z(t)}{2\pi}\, dA(z).
$$
As $\int_{-\pi}^\pi \frac{P_z(t)}{2\pi} dt\!=\!1$ and $\|f\|_{A^2}\!=\!1$, we get  
$$
\textstyle 
\int_{-\pi}^\pi \phi(t) \,dt =\int_\mD |f(z)|^2 \,dA(z)\cdot 1=1.
$$
Thus $\phi\in L^1(\pi,\pi)$, and so $\phi(t)\! <\!\infty$ for almost all $t\in (-\pi,\pi)$. 
 
Since $\|f\|_{A^2}\!=\!1$, the holomorphic function $f$ cannot be identically zero. As the continuous map $(z,t)\mapsto P_z(t)$ is pointwise positive, we have $0\!<\!\phi(t) \!<\!\infty$ for almost all $t\in (-\pi,\pi]$. 

 Define $\Phi:(-\pi,\pi]\to \mathbb{R}$ by 
$$
\textstyle
 \Phi(t) = \int_0^t \phi(\tau)\, d\tau.
$$
Then $\Phi$ is absolutely continuous, and hence also continuous. Moreover,  $\Phi$ is differentiable almost everywhere with $\Phi'(t)\!=\! \phi(t)\in (0,\infty)$ for almost all $t\in (-\pi,\pi]$. 
As $\phi\!>\!0$ almost everywhere, we have $\Phi(t)\!\neq\! 0$ for $t\in (-\pi,\pi]\setminus \{0\}$. Clearly $\Phi(0)\!=\!0$. 
Since $\phi$ is nonnegative everywhere, it follows from the above that $|\Phi(t)|\!\le\! 1\!=\!\int_{-\pi}^\pi \phi(t) \,dt$. 
Set
$$
\textstyle
\begin{array}{rcl}
\omega(t) \!\!\!\!&=&\!\!\!\!  \max\{|\Phi(t)|, \frac{|t|}{\pi} \}, \\[0.21cm] 
W(e^{it}) \!\!\!\!&=&\!\!\!\! \sqrt[4]{\omega(t)}.
 \end{array}
$$
Then $\omega(t)\in [0,1]$ for all $t\in(-\pi,\pi]$. 
We claim that $\log W \in L^1(\mT)$. 
We have
 $
\textstyle 
|\log W(e^{it})| = |\tfrac{1}{4}\log \omega(t)|
\le \tfrac{1}{4}|\log \frac{|t|}{\pi}|.
$ 
Hence
$$
\textstyle
\int_{-\pi}^{\pi} |\log W(e^{it})|\, dt
\le \tfrac{1}{4}2\int_0^{\pi} \log\frac{\pi}{t}\, dt 
= \frac{1}{2}(t\log \pi +t-t\log t)|_0^{\pi}  =\frac{\pi}{2} <\infty.
$$
So $\log W \in L^1(\mT)$.

Define the outer function $g$ by 
$$
\textstyle 
g(z) = \exp(\frac{1}{2\pi}\int_{-\pi}^{\pi} \frac{e^{it}+z}{e^{it}-z} \log W(e^{it})\, dt)\text{ for all }z\in \mD.
$$
As $\|W\|_\infty\!\le\! 1$, it follows that $g\in H^\infty$.

 Moreover, we have $\omega(t)\to 0$ as $|t|\to 0$, and so $g(e^{it})\to 0$ as $|t|\to 0$. 
By Lindel\"of's theorem (see, e.g., \cite[Ex.~7(c),  p.88-89]{Gar}), we obtain 
$$
\textstyle
\lim\limits_{\mathbb{D} \ni z \to 1} g(z) = 0.
$$
Thus by Proposition~\ref{prop_Schark}, we conclude that $g\in \mathfrak{m}$. 
It remains to show that $\frac{f}{g}\in A^2$, which we establish below. 

The modulus of $g$ satisfies
$$
\textstyle
\frac{1}{|g(z)|^2}
\!=\! \exp(-2 \int_{-\pi}^{\pi} \frac{P_z(t)}{2\pi}\log W(e^{it})\, dt)
\!=\! \exp(\int_{-\pi}^{\pi} \frac{P_z(t)}{2\pi} \!\cdot\!\log\frac{1}{(W(e^{it}))^2}\, dt).
$$
Applying Jensen's inequality to the convex function $\mR\owns x\mapsto e^{x}$ with the probability measure $\frac{P_z(t)}{2\pi}\, dt$ gives
$$
\textstyle
\frac{1}{|g(z)|^2}
\!\le\! \int_{-\pi}^{\pi} \frac{P_z(t)}{2\pi}\, e^{\log\frac{1}{(W(e^{it}))^2}}\, dt
\!=\! \int_{-\pi}^{\pi} \frac{P_z(t)}{2\pi} \frac{1}{W(e^{it})^2}\, dt
\!=\! \int_{-\pi}^{\pi} \frac{P_z(t)}{2\pi} \frac{1}{\sqrt{\omega(t)}}\, dt.
$$
 Thus we get 
 $$
 \textstyle 
 \int_{\mD} \frac{|f(z)|^2}{|g(z)|^2}\, dA(z)
\le \int_{\mD} |f(z)|^2 (\int_{-\pi}^{\pi}
  \frac{P_z(t)}{2\pi} \frac{1}{\sqrt{\omega(t)}}\, dt)\, dA(z).
$$
Since all terms are non-negative, Fubini's theorem yields
$$
 \textstyle 
\int_{\mD} \frac{|f(z)|^2}{|g(z)|^2}\, dA(z)
\le  \int_{-\pi}^{\pi} \frac{1}{\sqrt{\omega(t)}}
  (\int_{\mD} |f(z)|^2 \frac{P_z(t)}{2\pi} \, dA(z))\, dt 
  = \int_{-\pi}^{\pi} \frac{\phi(t)}{\sqrt{\omega(t)}}\, dt.
$$
 For $t\!>\!0$,  we have $\omega(t) \ge \Phi(t) \!=\! \int_0^t \phi(\tau)\, d\tau\!\ge\! 0$, giving
$$
 \textstyle 
\frac{\phi(t)}{\sqrt{\omega(t)}} \le \frac{\phi(t)}{\sqrt{\Phi(t)}} \stackrel{\text{(a.e.)}}{=} \frac{\Phi'(t)}{\sqrt{\Phi(t)}},
$$
 and so 
$$
\textstyle 
 0\le \int_{0}^{\pi} \frac{\phi(t)}{\sqrt{\omega(t)}}\, dt 
\le \int_0^{\pi} \frac{\Phi'(t)}{\sqrt{\Phi(t)}}\, dt
= 2\sqrt{\Phi(t)}|_0^{\pi}
=2\Phi(\pi)\le 2(1)
< \infty.
$$
 For $t\!<\!0$,  we have $\omega(t) \ge|\Phi(t)|\!=\! -\Phi(t) \!=\! -\int_0^t \phi(\tau)\, d\tau\!\ge\! 0$, giving
$$
 \textstyle 
\frac{\phi(t)}{\sqrt{\omega(t)}} \le \frac{\phi(t)}{\sqrt{-\Phi(t)}} \stackrel{\text{(a.e.)}}{=}  \frac{\Phi'(t)}{\sqrt{-\Phi(t)}},
$$
and so 
$$
\textstyle 
\!\!\!\!\!\!\begin{array}{rcl}
 0\le \int_{-\pi}^{0} \frac{\phi(t)}{\sqrt{\omega(t)}}\, dt 
\!\!\!&\le&\!\!\! \int_{-\pi}^0 \frac{\Phi'(t)}{\sqrt{-\Phi(t)}}\, dt
= -2\sqrt{-\Phi(t)}|_{-\pi}^{0}
=2\sqrt{-\Phi(-\pi)}\\
\!\!\!&\le&\!\!\! 2(1)
< \infty.
\end{array}
$$
Consequently,  $\frac{f}{g} \in A^2$.
\end{proof}

\smallskip 

\noindent {\bf Acknowledgement.} The author thanks Professor Alexandru Aleman (Lund University) for suggesting the investigation of this question, and for several useful discussions and  insights.


\medskip 
 
 
\begin{thebibliography}{99}

\bibitem{BakNew}
J. Bak and D. Newman. 
{\em Complex analysis}. Third edition. 
 Undergraduate Texts in Mathematics, Springer, 2010.
      
\bibitem{Bos}
S. Bosch. 
{\em Algebraic geometry and commutative algebra}. Second edition. 
   Universitext, Springer, 2022. 
      
\bibitem{Bou}
N. Bourbaki. 
{\em Commutative algebra. Chapters 1-7}.  
 Elements of Mathematics, Springer, 1998.      
      
\bibitem{Gar}
J. Garnett. 
{\em Bounded analytic functions}. Revised first edition. 
Graduate Texts in Mathematics 236, Springer, 2007.

\bibitem{HedKorZhu}
H. Hedenmalm, B. Korenblum, and K. Zhu. 
{\em Theory of Bergman spaces}.  
Graduate Texts in Mathematics 199, Springer, 2000. 

\bibitem{Hof}
K. Hoffman. 
{\em Banach spaces of analytic functions}. 
Dover, 1962.

\bibitem{Nes}
J. Nestruev. 
{\em Smooth manifolds and observables}. Second edition. 
Graduate Texts in Mathematics 220, 
    Springer, 2020.

\bibitem{Nik}
N. Nikolski. 
{\em Treatise on the shift operator}.
    Grundlehren der mathematischen Wissenschaften 273.
     Springer, 1986.
     
\bibitem{Sas}
A. Sasane. 
On the faithful flatness of some modules arising in analysis. 
{\em Nagoya Mathematical Journal}, 261, Paper No. e19, 13, 2026. 

\bibitem{Sch}
I.J. Schark. 
Maximal ideals in an algebra of bounded analytic functions. 
{\em Journal of Mathematics and Mechanics}, 10:735-746, no. 5, 1961.

\bibitem{Ser}
J-P. Serre. 
G\'{e}om\'{e}trie alg\'{e}brique et g\'{e}om\'{e}trie analytique. 
{\em Annales de l'Institut Fourier, Universit\'{e} de Grenoble}, 
6:1-42, 1955/56.

\bibitem{Vak}
R. Vakil. 
{\em The rising sea -- foundations of algebraic geometry}. 
Princeton University Press, 2025.

  
\end{thebibliography}
\end{document}